\documentclass{amsart}
\usepackage{tabularx}
\usepackage[notref,notcite]{showkeys}         
\usepackage{enumitem,amsmath,amsfonts,amssymb,xcolor}
\usepackage{latexsym,amsfonts,euscript,amssymb}
\newtheorem{lemma}{\bf Lemma}[section]

\newtheorem{thm}[lemma]{\bf Theorem}
\newtheorem{rem}[lemma]{\bf Remark}

\newtheorem{conj}[lemma]{\bf Conjecure}

\newcommand{\SL}{{\operatorname{SL}}}

\newcommand{\PSL}{{\operatorname{PSL}}}

\newcommand{\PSU}{{\operatorname{PSU}}}

\newcommand{\PSp}{{\operatorname{PSp}}}
\newcommand{\OO}{{\operatorname{P\Omega}}}

 \DeclareMathOperator{\Aut}{Aut}

\input xy
\xyoption{all}

\title[]{Characterization of $\PSL(2,q)$ by the number of singular elements}

\author{Rulin Shen}
\address{Rulin Shen. Department of Mathematics, Hubei Minzu University \\ Enshi, Hubei Province,
    445000, P. R. China. }
\email{shenrulin@hotmail.com}

\author{Deyu Yan}
\address{Deyu Yan. School of Public Health, Hubei University of Medicine \\ Shiyan, Hubei Province,
    442000, P. R. China. }
\email{yandeyu2023@hotmail.com}

\thanks{The research is supported by the NSF of China (Grant No.
 12161035).}

\date{}
\subjclass[2010]{20D06, 20D60} \keywords{Finite groups, $r$-singular
elements, simple groups}
\begin{document}

    \setlength{\parskip}{2mm}

    \maketitle

    \begin{abstract}
        Given a finite group $G$, let $\pi(G)$ denote the set of all primes that divide the order of $G$. For a prime $r \in \pi(G)$,
we define $r$-singular elements as those elements of $G$ whose order is divisible by $r$. Denote by $S_r(G)$ the number of $r$-singluar elements of $G$. We denote the proportion $S_r(G)/|G|$ of $r$-singular elements  in $G$ by ${\mu_r}(G)$. Let $\mu(G) := {\{\mu_r}(G) | r\in \pi(G)\}$ be the set of all proportions of $r$-singular elements for each prime $r$ in $\pi(G)$.
        In this paper,  we prove that if a finite group $G$ has the same set  $\mu(G)$ as the simple group $\PSL(2,q)$,
        then $G$ is isomorphic to $\PSL(2,q)$.
    \end{abstract}

    \section{Introduction}

    All groups
considered in this paper are finite. Given a finite group $G$, let
$\pi(G)$ be the set of all primes that divide the order of $G$. For
each prime $r\in \pi(G)$, we define $r$-singular
    elements as those elements of $G$ whose order is divisible by $r$, while $r$-regular elements are those whose order is not divisible by $r$.
    The number of $r$-singular and $r$-regular elements plays an important role in understanding the structure of the group, particularly in finite simple groups.

    Several researchers have investigated the properties of singular elements in different types of groups. In 1991, Guralnick and L$\rm{\ddot{u}}$beck
    (see \cite{GL}) studied $p$-singular elements in Chevalley groups in
characteristic $p$. In 1995,
Isaacs, Kantor and Spaltenstein (see \cite{IKS}) researched the probability
of an element in a group being $p$-singular. In 2012, Babai, Guest,
    Praeger and Wilson (see \cite{BGP}) investigated the proportions of $r$-regular elements in finite classical groups. Brandl and Shi (see \cite{BS}) characterized
$\PSL(2,q)$ by its set of orders of elements.
    Denote by $S_r(G)$ the number of $r$-singluar elements of $G$. Let ${\mu_r}(G):=$ $S_r(G)/|G|$ denote the proportion of $r$-singular elements
 of $G$.
     Let $\mu(G) := {\{\mu_r}(G) | r\in \pi(G)\}$ be the set of all proportions of $r$-singular elements for each prime $r$ that divides $|G|$. In this paper, we use this set to characterize
    $\PSL(2,q)$. Our main result is the following theorem:

    \begin{thm}
        Let $G$ be a finite group, and $q\ge 4$. If $\mu (G) = \mu
        (\PSL(2,q))$, then $G \cong \PSL(2,q)$.
    \end{thm}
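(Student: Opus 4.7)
First, I would compute $\mu(\PSL(2,q))$ completely explicitly. Writing $q = p^f$ and $d = \gcd(2, q-1)$, every non-identity element of $L := \PSL(2,q)$ has order equal to $p$, a divisor of $(q-1)/d$, or a divisor of $(q+1)/d$. The standard conjugacy-class data for $\PSL(2,q)$ (unipotent elements of total count $q^2 - 1$; split semisimple classes with centralizer orders $(q-1)/d$; non-split semisimple classes with centralizer orders $(q+1)/d$) lets me write each $\mu_r(L)$ as a closed-form rational in $q$, keyed to whether $r = p$, $r \mid (q-1)/d$, or $r \mid (q+1)/d$. A convenient anchor is $\mu_p(L) = d/q$, obtained by dividing the $q^2 - 1$ unipotent elements by $|L| = q(q^2-1)/d$; this distinctively shaped value will be used to identify the characteristic of $L$ inside $G$.

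Second, I would extract arithmetic constraints on $G$ from the set equation $\mu(G) = \mu(L)$. A standard Frobenius-theorem counting argument shows that the number of $r$-regular elements of any finite group $G$ is divisible by $|G|_{r'}$, so each $\mu_r(G)$, written in lowest terms, has denominator dividing $|G|_r$. Matching denominators appearing in $\mu(G)$ against those in $\mu(L)$ forces $|\pi(G)| = |\pi(L)|$ (once coincidences among proportion values are controlled) and pins down each prime-power part of $|G|$, with the goal of concluding $|G| = |L|$. Combining this with the explicit formulas from the first step, I would then argue that $G$ has the same set of element orders as $L$.

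Third, once $G$ is known to have the same spectrum as $L$, the characterization theorem of Brandl and Shi \cite{BS} — which says that $\PSL(2,q)$ is determined up to isomorphism among finite groups by its set of element orders — immediately yields $G \cong L$. The small values $q \in \{4,5,7,9\}$, where exceptional isomorphisms among simple groups occur, may need to be treated separately by direct inspection.

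The main obstacle is the second step. Because the hypothesis supplies only an \emph{unordered} set of proportions, one does not know a priori which $\mu_r(G)$ corresponds to which $\mu_s(L)$, and two distinct primes could even contribute the same value. The crux is a bookkeeping argument using the Frobenius-divisibility constraint on denominators together with the rigid shape of the closed forms for $\mu_r(L)$: first recover the characteristic $p$ via the signature $d/q$, then peel off the contributions of the split and non-split tori to fix the torus primes and, eventually, the full order of $G$. The technical work is essentially number-theoretic case analysis in $q$, distinguishing parity of $q$ and handling primes that simultaneously divide $q-1$, $q+1$, or the numerators of proportions.
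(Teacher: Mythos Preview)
Your first moves are sound and coincide with the paper: the explicit values of $\mu_r(\PSL(2,q))$ are exactly the paper's Lemma~2.8, and the Frobenius divisibility observation is its Lemma~2.4, from which $|G|=|\PSL(2,q)|$ follows (Remark~2.5). Incidentally, your ``unordered set'' worry is milder than you suggest: by Lemma~2.4 each $\mu_r(G)$ in lowest terms has denominator exactly $|G|_r$, so the prime $r$ and the Sylow order are read off from the value itself, and no matching ambiguity survives.

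The genuine gap is the step you flag only obliquely: passing from $\mu(G)=\mu(L)$ and $|G|=|L|$ to ``$G$ has the same spectrum as $L$''. You give no mechanism for this, and it is the whole difficulty. The invariants $\mu_r$ are one prime at a time; they count $r$-singular elements but carry no direct information about whether $G$ contains an element of order $rs$ for distinct primes $r,s$. Yet the spectrum of $\PSL(2,q)$ is characterized precisely by such non-adjacencies in the prime graph (no element order is divisible by $p$ together with a torus prime, nor by a prime of the split torus together with one of the non-split torus). Recovering those non-adjacencies from the $\mu$-data alone is not a bookkeeping exercise; it would require a structural argument at least as deep as the one you are trying to shortcut, after which invoking Brandl--Shi adds nothing.

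For comparison, the paper does not attempt to recover the spectrum. After establishing $|G|=|L|$ and that $G$ is perfect, it takes $N=O_{p'}(G)$, shows $G/N$ has a unique minimal normal subgroup and is in fact simple (using Lemma~2.6 to bound the number of $p$-classes and Lemma~2.7 to block $p$ from splitting across a normal series), proves the Sylow $p$-subgroup of $G/N$ is abelian, and then runs the classification: Walter's theorem handles even $q$, while for odd $q$ the alternating, Lie-type (via cyclotomic-factorisation order estimates), and sporadic cases are eliminated one by one until only $\PSL(2,q)$ remains, forcing $N=1$. If you want to pursue your route, the missing ingredient is an independent proof that $\mu(G)=\mu(\PSL(2,q))$ forces the prime graph of $G$ to be that of $\PSL(2,q)$; absent that, the plan does not close.
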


Remark 2.5 in Section 2 says that $\mu(G)=\mu(H)$ implies
 $|G|=|H|$. So $\mu(G)$ contains a lot of information on the structure of $G$. We give the following conjecture.

  \begin{conj}
        Let $G$ be a finite group, and $S$ be a non-abelian simple group. If $\mu (G) = \mu
        (S)$, then $G \cong S$.
    \end{conj}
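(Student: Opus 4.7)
The plan is to combine the order information from Remark~2.5 with the prime-by-prime data in the proportions $\mu_r(G)$. By Remark~2.5, the hypothesis $\mu(G)=\mu(S)$ immediately yields $|G|=|S|$, so the conjecture reduces to an order-recognition problem: show that $S$ is the only finite group of order $|S|$ whose singular-element profile equals $\mu(S)$.

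First I would dispose of the case in which $G$ is itself simple. Invoking the classification of finite simple groups together with the Kimmerle--Lyons--Sandling--Teague tabulation of simple orders, the only non-trivial possibilities are the well-known coincidences: $\PSL(3,4)$ versus $A_8$ at order $20160$, and the families $\PSp(2n,q)$ versus $\OO(2n{+}1,q)$ for $n\ge 3$ and $q$ odd. In each such case a direct computation of some $\mu_r$ from the conjugacy-class data should distinguish the two groups; for instance, $A_8$ contains elements of order $15$ while $\PSL(3,4)$ does not, so $\mu_3$ and $\mu_5$ already differ.

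The serious work is to handle non-simple $G$ with $|G|=|S|$ and $\mu(G)=\mu(S)$. The strategy is to exploit the exact equality $S_r(G)=S_r(S)$ for every prime $r\in\pi(S)$, with a bias toward primes $r$ for which the Sylow $r$-subgroup of $S$ is cyclic or otherwise rigid --- Zsigmondy primitive prime divisors in the Lie-type case, and ``isolated'' primes for the sporadic and alternating groups. Assuming $G$ has a proper nontrivial normal subgroup $N$ and splitting on whether $r\mid|N|$, one expects the resulting numerical constraints (the $r$-singular elements either project faithfully to $G/N$ or are entirely absorbed by $N$) to force $\soc(G)$ to be a non-abelian simple group of order $|S|$. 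Once that is established, Step~2 identifies $\soc(G)\cong S$, and the order equality $|G|=|S|=|\soc(G)|$ gives $G\cong S$.

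The principal obstacle is that no uniform CFSG-free argument is presently known to exclude non-simple groups of a given simple-group order by their singular-element counts alone, so the proof is likely to proceed family by family: sporadic and small alternating groups first, where character tables make the counting direct, then the groups of Lie type using generic formulas of Deligne--Lusztig type for semisimple and unipotent class sizes. Classical groups of large rank should be the most delicate case, because their Sylow structure and element-order spectra are the most intricate, and they are precisely the simple groups for which the strongest recognition-by-spectrum results are hardest to obtain; a concrete subproblem of the conjecture is therefore to upgrade existing spectrum-recognition theorems to the weaker invariant $\mu$.
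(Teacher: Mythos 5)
The statement you set out to prove appears in the paper only as a \emph{conjecture}; the authors prove it solely for $S=\PSL(2,q)$ (their main theorem) and offer no proof of the general case, so there is no proof in the paper to compare yours against. Your text is accordingly a research programme rather than a proof, and it has concrete gaps even as an outline. First, the reduction of the simple-$G$ case to the known order coincidences is reasonable, but your way of separating $A_8$ from $\PSL(3,4)$ is a non sequitur: the fact that $A_8$ has elements of order $15$ while $\PSL(3,4)$ does not shows the \emph{spectra} differ, not that the proportions $\mu_3$ or $\mu_5$ differ; those are sums over all classes of $3$-singular (resp.\ $5$-singular) elements and must actually be computed. (Also, $\mu(G)=\mu(S)$ is an equality of \emph{sets}; that it forces $\mu_r(G)=\mu_r(S)$ prime by prime requires the observation, implicit in Lemma 2.4 and Remark 2.5, that $\mu_r$ written in lowest terms has denominator $|G|_r$, so the prime $r$ can be read off from each value. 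You use this without justification when you assert $S_r(G)=S_r(S)$ for every $r$.)

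Second, and decisively, the non-simple case --- which you correctly identify as the serious work --- is left entirely to expectation: ``one expects the resulting numerical constraints \dots to force $\soc(G)$ to be a non-abelian simple group.'' That is precisely where all of the content lies. The paper's treatment of the single family $\PSL(2,q)$ already requires perfectness of $G$ (Lemma 2.9), uniqueness of the minimal normal subgroup above the $p'$-core (Lemma 2.7), an analysis of the solvable-socle case via sharp $2$-transitivity, Zassenhaus' near-fields and nonsolvable Frobenius complements, Walter's classification of simple groups with abelian Sylow $2$-subgroups, the Navarro--Solomon--Tiep theorem, and a long case-by-case elimination of every other Lie-type quotient using Artin's cyclotomic factorisation of group orders; none of this machinery is generic in $S$, and you concede yourself that no uniform argument is known. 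You should also make explicit the final step: even once $G/N$ is shown to be simple of the correct order one must still force $N=1$, which the paper achieves by order-divisibility arguments in each case. As written, your proposal establishes nothing beyond what Remark 2.5 already gives, and the conjecture remains open.
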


    \section{Some Lemmas}
    In this section, the letter $G$ always denotes a finite
group. Moreover, the letter $r$ always denotes a prime, and $r'$ is
the set of all primes distinct from $r$. For a positive integer $n$
and a set $\pi$ of primes, write $n_{\pi}$ for the $\pi$-part of
$n$.
    \begin{lemma}(\cite[Lemma 2.2]{IKS})
        If $N \unlhd G$, then $\mu_r (G) \geq  \mu_r (G/N) + \mu_r
        (N)/|G:N|$.
    \end{lemma}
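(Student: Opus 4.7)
The plan is to bound $S_r(G)$ below by exhibiting two disjoint families of $r$-singular elements inside $G$, one coming from the quotient $G/N$ and one coming from $N$ itself, and then divide by $|G|$.

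First I would partition $G$ into the cosets of $N$ and sort these cosets according to whether the image $gN$ is $r$-singular in $G/N$ or not. The key divisibility fact is that the order of $gN$ in $G/N$ divides the order of $g$ in $G$, so whenever $gN$ is $r$-singular in $G/N$, every element of the coset $gN$ is $r$-singular in $G$. This contributes $S_r(G/N)\cdot|N|$ elements to $S_r(G)$.

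Next, I would look inside the coset $N$ itself (which is one of the cosets whose image in $G/N$ is the identity, hence $r$-regular in $G/N$). Any element of $N$ whose order is divisible by $r$, computed in $N$, also has order divisible by $r$ in $G$; this furnishes another $S_r(N)$ elements of $G$ that are $r$-singular. Crucially, these elements are disjoint from the previous family, since they lie in the coset $N$ whereas the previous family lives in cosets whose image in $G/N$ is \emph{not} the identity.

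Combining the two disjoint contributions gives
\[
S_r(G)\;\geq\;S_r(G/N)\cdot|N|\;+\;S_r(N),
\]
and dividing by $|G|$ yields
\[
\mu_r(G)\;\geq\;\frac{S_r(G/N)}{|G/N|}\;+\;\frac{S_r(N)}{|N|}\cdot\frac{|N|}{|G|}\;=\;\mu_r(G/N)\;+\;\frac{\mu_r(N)}{|G:N|},
\]
which is the desired inequality. There is no serious obstacle here: the whole argument is a disjoint-counting estimate, and the only point that requires a moment of care is checking the disjointness of the two families (otherwise one would double-count the $r$-singular elements of $N$, which are also elements of $G$ but whose coset image is trivial and therefore not counted in the first family).
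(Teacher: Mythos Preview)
Your argument is correct. The paper does not supply its own proof of this lemma; it is quoted directly from \cite[Lemma 2.2]{IKS}, and the argument there is exactly the disjoint-counting estimate you give: preimages of $r$-singular cosets contribute $S_r(G/N)\cdot|N|$ elements, the $r$-singular elements of $N$ contribute a further $S_r(N)$ elements disjoint from the first family, and dividing by $|G|$ yields the inequality.
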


    \begin{lemma}
        Let $N$ be the largest normal $r'$-subgroup of $G$.
        Then $\mu_r(G) =\mu_r (G/N)$.
    \end{lemma}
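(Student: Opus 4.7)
The plan is to show that the set of $r$-singular elements of $G$ is precisely the preimage under the natural projection $\pi \colon G \to G/N$ of the set of $r$-singular elements of $G/N$. Once that is established, the $r$-singular elements form a union of cosets of $N$, so $S_r(G) = |N| \cdot S_r(G/N)$, and dividing by $|G| = |N| \cdot |G/N|$ gives the claim immediately.

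The easy direction is that if $gN$ is $r$-singular in $G/N$, then $g$ is $r$-singular in $G$: the order of $gN$ divides the order of $g$, so $r \mid |gN|$ forces $r \mid |g|$. This direction uses nothing about $N$ beyond normality.

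The step where the hypothesis that $N$ is an $r'$-group actually enters is the converse: if $r \mid |g|$, then $r \mid |gN|$. I would argue by contradiction. Suppose $k := |gN|$ satisfies $\gcd(k,r) = 1$. Then $g^k \in N$. Writing $|g| = r^a m$ with $a \geq 1$ and $\gcd(m,r) = 1$, one has $|g^k| = |g|/\gcd(|g|,k)$, and since $\gcd(k,r) = 1$ the full $r$-part $r^a$ still divides $|g^k|$. But $g^k \in N$, and $N$ is an $r'$-group, so $r \nmid |g^k|$, a contradiction. Hence $r \mid |gN|$.

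Combining the two directions, $\pi^{-1}(\{r\text{-singular elements of }G/N\})$ is exactly the set of $r$-singular elements of $G$, and counting gives $\mu_r(G) = \mu_r(G/N)$. The only real subtlety is the converse direction above, where the $r'$-property of $N$ is indispensable; I do not anticipate any serious obstacle beyond stating that step cleanly. (Alternatively, one could deduce the result directly from Lemma 2.1: since $N$ contributes no $r$-singular elements, $\mu_r(N) = 0$, so Lemma 2.1 yields $\mu_r(G) \geq \mu_r(G/N)$, and the reverse inequality follows from the fact that lifts of $r$-regular elements of $G/N$ to $G$ need not be $r$-regular in general — so this shortcut does not quite work without the argument above.)
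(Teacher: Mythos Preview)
Your proof is correct. The paper actually states Lemma~2.2 without proof or citation, treating it as an elementary observation, so there is no argument to compare against. Your approach---showing that the natural projection $G\to G/N$ sets up a bijection between the $r$-singular cosets and the $r$-singular elements of $G/N$, with each such coset consisting entirely of $r$-singular elements of $G$---is the standard and most direct way to see this. Note that your argument uses only that $N$ is a normal $r'$-subgroup; the hypothesis that $N$ is the \emph{largest} such subgroup is irrelevant to the conclusion (and indeed the paper only ever applies the lemma with that particular choice of $N$). The parenthetical remark at the end is correctly self-critical: Lemma~2.1 with $\mu_r(N)=0$ gives only $\mu_r(G)\geq\mu_r(G/N)$, and the reverse inequality genuinely requires the coset argument you gave.
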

 The following lemma is a long-term conjecture of Frobenius, which
 was confirmed by
Iiyori and Yamaki in 1991 (see \cite{IY}) by using the
classification of finite simple groups.
    \begin{lemma}
Assume that the equation $x^n = 1$ in $G$ has precisely $n$
solutions. Then the set $\{x\in G|x^n=1\}$ of all these solutions is
a normal subgroup of $G$.
    \end{lemma}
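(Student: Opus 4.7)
The plan is to reduce the assertion to a statement about non-abelian finite simple groups and then to verify it on that list. Throughout, set $T := \{x \in G : x^n = 1\}$. Since conjugation preserves orders, $T$ is automatically closed under conjugation, so once I show $T$ is a subgroup, normality follows at no extra cost. The classical theorem of Frobenius gives $\gcd(n, |G|) \mid |T|$, and combined with $|T| = n$ this forces $n \mid |G|$.

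The first step would be an induction on $|G|$ with $G$ a minimal counterexample. For any proper normal subgroup $N$ of $G$, I would apply Frobenius's classical theorem inside $N$ to control $|T \cap N|$, and partition $T$ along cosets of $N$ to extract divisibility data for the $n$-torsion in $G/N$. A short double count should then show that either $T \cap N$ satisfies the precise-count hypothesis inside $N$, or the image of $T$ in $G/N$ satisfies it inside $G/N$. In either situation the inductive hypothesis returns a normal subgroup which, once pulled back, makes $T$ a normal subgroup of $G$, contradicting minimality. This drives the minimal counterexample to be non-abelian simple.

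The second step, which is the hard one, is a case-by-case verification on the list of non-abelian finite simple groups $S$. For alternating groups this reduces to a combinatorial count over permutations with restricted cycle structure. For sporadic groups it is a finite check against the known element-order data. For groups of Lie type one would use Steinberg-type polynomial expressions for $|S|$ together with sharp counts of semisimple and unipotent elements of prescribed order to show that $|T| = n$ can hold only when $T$ already fills out a proper parabolic or Levi-type subgroup.

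The main obstacle is uniformity in the Lie-type case: the bounds must be tight enough, uniformly in the rank and in the field size $q$, to preclude every accidental numerical match between divisors of $n$ and cyclotomic factors of $|S|$. This is exactly the heavy lifting carried out by Iiyori and Yamaki in \cite{IY} with the aid of the classification of finite simple groups, and I would invoke their argument rather than attempt to rederive it from scratch.
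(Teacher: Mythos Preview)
The paper offers no proof of this lemma: it is recorded as the long-standing Frobenius conjecture and attributed to Iiyori and Yamaki \cite{IY}, who settled it using the classification of finite simple groups. Your proposal lands in exactly the same place---you outline the standard architecture (minimal-counterexample reduction to a non-abelian simple group, then a CFSG case analysis) and then explicitly defer to \cite{IY} for the execution---so the two treatments coincide.
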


    \begin{lemma}
        Let $G$ be a finite group, and $R$ be a Sylow $r$-subgroup of $G$. Then there exists an integer $t$, coprime to $r$, such that
 $\mu_r(G) = t/|R|$.
    \end{lemma}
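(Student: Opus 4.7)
The plan is to split the claim into two independent parts: (a) $m := |G|_{r'}$ divides $S_r(G)$, and (b) $r$ does not divide $S_r(G)$. Granting both, we can write $S_r(G) = tm$ for an integer $t$ coprime to $r$, and then
\[
\mu_r(G) \;=\; \frac{S_r(G)}{|G|} \;=\; \frac{tm}{|R|\,m} \;=\; \frac{t}{|R|},
\]
which is exactly what the lemma asserts. I assume throughout that $r\mid|G|$, as otherwise $\mu_r(G)=0$ and there is nothing to show.

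For claim (a) I would appeal to Frobenius' classical divisibility theorem: the number of $x\in G$ satisfying $x^n=1$ is divisible by $\gcd(n,|G|)$. Since an element is $r$-regular precisely when its order divides $m$, i.e.\ when $x^m=1$, this gives $m\mid N_{r'}(G)$, where $N_{r'}(G):=|G|-S_r(G)$ is the number of $r$-regular elements. Because $m\mid|G|$ as well, it follows that $m\mid S_r(G)$.

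For claim (b) I would set up a Sylow-style fixed-point count. Let $X$ denote the set of $r$-regular elements of $G$. Conjugation preserves element orders, so $R$ acts on $X$, every $R$-orbit has $r$-power size, and consequently $|X|\equiv|X^R|\pmod r$, where $X^R=X\cap C_G(R)$. The structural point I need is that every $r$-element of $C_G(R)$ lies in $Z(R)$: if $y\in C_G(R)$ has $r$-power order, then $\langle R,y\rangle$ is an $r$-subgroup of $G$ (since $y$ centralises $R$) containing $R$, so by Sylow's theorem $y\in R$, and therefore $y\in R\cap C_G(R)=Z(R)$. Hence $Z(R)$ is the unique Sylow $r$-subgroup of $C_G(R)$, and because $Z(R)\le Z(C_G(R))$ by definition of the centralizer, we obtain a direct-product decomposition $C_G(R)=Z(R)\times H$ with $|H|$ coprime to $r$. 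The $r$-regular elements of $C_G(R)$ are exactly those with trivial $Z(R)$-component, giving $|X^R|=|H|$, which is coprime to $r$. Therefore $r\nmid|X|=N_{r'}(G)$, and since $r\mid|G|$ we conclude $r\nmid S_r(G)$.

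The main technical step is (b), and within it the direct-product decomposition of $C_G(R)$; once one recognises that $Z(R)$ plays the dual role of being the unique Sylow $r$-subgroup of $C_G(R)$ and of being central in $C_G(R)$, the decomposition is immediate and the fixed-point count delivers the non-divisibility. Both tools used — Frobenius' divisibility theorem and the Sylow orbit-counting congruence — are entirely classical, and I expect the argument to fit into a short and self-contained proof.
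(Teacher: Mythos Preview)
Your proposal is correct and follows essentially the same route as the paper: Frobenius' divisibility theorem for part~(a), and an $R$-orbit count on the set of $r$-regular elements together with the decomposition $C_G(R)=Z(R)\times H$ for part~(b). The only cosmetic difference is that the paper invokes Schur--Zassenhaus by name to obtain the complement $H$, whereas you derive it from $Z(R)$ being simultaneously the unique Sylow $r$-subgroup and central in $C_G(R)$; either way one is implicitly using Schur--Zassenhaus (or Burnside's normal $p$-complement theorem), so you may wish to cite it explicitly.
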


    \begin{proof}
        Let $|G|_r$ and $|G|_{r'}$ denote the $r$-part and the $r'$-part of
$|G|$, respectively.  The number of solutions of equation $x^{|G|_{r'}} =1$ is the same as that of the $r$-regular elements in $G$.
        Therefore, by Frobenius' theorem (see \cite{FR}), the number of solutions is a multiple of $|G|_{r'}$, which implies that the number
 of $r$-regular elements $R_r(G)$ in $G$ equals $k|G|_{r'}$, where
 $k$ is a positive integer.

        Next denote by $S_r(G)$ the number of $r$-singular elements  in $G$, we have
        $\mu_r(G)={S_r(G)}/|G|={|G|^{-1}}{(|G| - {R_r}(G))}=1 -
        {|G|^{-1}}{k|G|_{r'}}=1-{k}{|G|_r^{-1}}={|R|^{-1}}{(|R|-k)},$ where
        $R$ is a Sylow $r$-subgroup of $G$. Let $t = |R| - k$, and we will prove that $t$ and $r$ are coprime, or equivalently, that the number of $r$-regular elements and $r$ are coprime. Now, let $G$ act by conjugation on the set $\Omega_r$ of $r$-regular
        elements in $G$, and let the orbit of an element $x\in \Omega_r$ be denoted by $O_x$. Note that $O_x =\{x\}$ if and only if $C_R(x)=R$. Thus, the number of $r$-regular elements is given by
        $$R_r(G) = |C_{\Omega_r}(R)|+\sum_{x\in\Omega_r-C_{\Omega_r}(C)}{|C:{C_R}(x)|},
        $$
        where $C$ is the stabilizer of $R$ in $G$. By Schur-Zassenhaus
  theorem(see e.g. 3.3.1 of \cite{KS}), we know that ${C_G}(R) = Z(R) \times H$, where $|H|$ and $r$ are coprime. Therefore, we have $C_{\Omega_r}(R) =
        {\Omega_r} \cap {C_G}(R) = H$. Hence, we get ${R_r}(G) \equiv |H|
        \bmod r$, which implies that the number of $r$-regular elements and  $r$ are coprime.     \end{proof}

  Lemma 2.4 directly proves the following remark.
    \begin{rem}
       If $\mu (G)  = \mu (H)$, then
     $|G|=|H|$.
    \end{rem}

    \begin{lemma}
        Let $G$ be a finite group, and $r$ a prime divisor of $|G|$.
        Let $x_1$, $x_2$, $\cdots$, $x_k$ be representatives of $G$-conjugacy classes of nontrivial $r$-elements in $G$. Then
        $\mu_r(G)=\sum\limits_{i = 1}^k(1-\mu_r(C_G(x_i))$.
    \end{lemma}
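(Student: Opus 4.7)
The plan is to exploit the canonical decomposition of each element $g \in G$ as a commuting product $g = g_r g_{r'}$ of its $r$-part $g_r$ and $r'$-part $g_{r'}$. The key observation is that $g$ is $r$-singular if and only if $g_r \neq 1$, so sending $g \mapsto (g_r, g_{r'})$ sets up a bijection between $r$-singular elements of $G$ and ordered pairs $(x,y)$ with $x$ a nontrivial $r$-element of $G$, $y \in C_G(x)$ an $r$-regular element, and $g = xy$. Uniqueness of this decomposition follows from standard facts about the cyclic subgroup $\langle g \rangle$.

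Using this bijection, I would count the $r$-singular elements by first fixing $x$ and summing over compatible $y$:
\begin{equation*}
S_r(G) \;=\; \sum_{\substack{x \in G\\ x\,\text{nontrivial }r\text{-element}}} R_r(C_G(x)),
\end{equation*}
where $R_r(H) = |H| - S_r(H) = |H|(1 - \mu_r(H))$ denotes the number of $r$-regular elements of a subgroup $H$.

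Next I would collect the summands by $G$-conjugacy class. Since conjugate elements have conjugate centralizers, $R_r(C_G(x))$ depends only on the class of $x$; the class of $x_i$ contributes $|G:C_G(x_i)|$ identical terms, so
\begin{equation*}
S_r(G) \;=\; \sum_{i=1}^{k} |G:C_G(x_i)| \cdot |C_G(x_i)|\bigl(1 - \mu_r(C_G(x_i))\bigr) \;=\; |G|\sum_{i=1}^{k}\bigl(1 - \mu_r(C_G(x_i))\bigr).
\end{equation*}
Dividing by $|G|$ yields the desired identity.

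The only genuine step that needs care is the bijection in the first paragraph: I must verify that $(x,y) \mapsto xy$ is well-defined (i.e.\ $xy$ has $r$-part exactly $x$ and $r'$-part exactly $y$ whenever $x,y$ commute with $x$ an $r$-element and $y$ an $r'$-element) and inverse to $g \mapsto (g_r, g_{r'})$. This is the main (mild) obstacle; the subsequent class-sum rearrangement is routine. No previous lemma from the excerpt is actually needed here, though the identity will feed into later inductive arguments bounding $\mu_r(G)$.
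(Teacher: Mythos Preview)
Your proposal is correct and follows essentially the same approach as the paper: both exploit the unique decomposition $g = g_r g_{r'}$ to count $r$-singular elements via their $r$-parts, arriving at $S_r(G) = \sum_{i=1}^k |G:C_G(x_i)| \cdot R_r(C_G(x_i))$ and then dividing by $|G|$. Your write-up is in fact a bit more explicit about the bijection underlying the count, but the argument is the same.
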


    \begin{proof}Note that every $r$-singular element $x$ can be written uniquely  as $x=x_rx_{r'}$ with $[x_r,x_{r'}]=1$, where $x_r$ and $x_{r'}$ are $r$- and $r'$-part of $x$, respectively (see, e.g. p.17 of \cite{PD}).  Using the same notations as Lemma 2.4, denoted by $S_r(G)$ and $R_r(G)$ the number of $r$-singular and $r$-regular elements, respectively.
    So the number of
        $r$-singular elements in $G$ is
        $S_r(G)=\sum\limits_{i=1}^k|G:C_G(x_i)|\cdot
        R_r(C_G(x_i)),
        $
        this implies that the proportion of $r$-singular elements in $G$ is
        \begin{align*}
        \mu_r(G)& =S_r(G)/|G|={|G|^{-1}}{\sum\limits_{i=
                1}^k|G:C_G(x_i)|\cdot R_r(C_G(x_i))}\\&
        =\sum\limits_{i=1}^k R_r(C_G(x_i))/{|C_G(x_i)|}
        =\sum\limits_{i=1}^k (1-\mu_r(C_G(x_i))).
        \end{align*}
        Thus $\mu_r(G)=\sum\limits_{i=1}^k (1-\mu _r(C_G(x_i)))$.
    \end{proof}

    \begin{lemma} Let $\mu_r(G)={t}/{|R|}$ as in Lemma $2.4$ and $N$ a normal subgroup of $G$.
        If  $t<r$,
        then $r\nmid |N|$ or $r\nmid|G/N|$.
    \end{lemma}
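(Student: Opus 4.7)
The plan is to prove the contrapositive: assume that $r$ divides both $|N|$ and $|G/N|$, and deduce that $t\ge r$. The main tool will be Lemma 2.1, combined with Lemma 2.4 applied to the subgroup $N$ and the quotient $G/N$.

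First I would set up notation. Let $R$ be a Sylow $r$-subgroup of $G$. Since $N\trianglelefteq G$, the intersection $R\cap N$ is a Sylow $r$-subgroup of $N$ and $RN/N$ is a Sylow $r$-subgroup of $G/N$, so $|R|=|R\cap N|\cdot|RN/N|$. Write $|R\cap N|=s$ and $|RN/N|=u$, so $|R|=su$; by hypothesis $r\mid s$ and $r\mid u$. Apply Lemma 2.4 to $N$ and $G/N$ to obtain $\mu_r(N)=t_1/s$ and $\mu_r(G/N)=t_2/u$ where $t_1,t_2$ are positive integers coprime to $r$; in particular $t_1,t_2\ge 1$.

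Next I would use Lemma 2.1. Writing $|G:N|=u\cdot m$ where $m$ is the $r'$-part of $|G:N|$, the inequality
\begin{equation*}
\mu_r(G)\ \ge\ \mu_r(G/N)+\frac{\mu_r(N)}{|G:N|}
\end{equation*}
becomes
\begin{equation*}
\frac{t}{su}\ \ge\ \frac{t_2}{u}+\frac{t_1}{s\cdot um}
\ =\ \frac{t_2 s m + t_1}{sum}.
\end{equation*}
Clearing denominators gives $tm\ge t_2 sm+t_1$, hence
\begin{equation*}
t\ \ge\ t_2 s+\frac{t_1}{m}\ >\ t_2 s\ \ge\ s\ \ge\ r,
\end{equation*}
using $t_2\ge 1$ and $r\mid s$. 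Since $t$ is an integer strictly greater than $r$ (in fact $\ge r+1$), we have $t\ge r$, contradicting the hypothesis $t<r$.

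The argument is essentially a bookkeeping exercise once the Sylow decomposition $|R|=|R\cap N|\cdot|RN/N|$ and the two applications of Lemma 2.4 are in place. The only subtlety I anticipate is handling the $r'$-factor $m$ coming from $|G:N|$ correctly in the denominator; because it contributes a strictly positive term $t_1/m$, it does not spoil the bound and is simply discarded at the end. No classification or deep structure theory is needed beyond the three earlier lemmas.
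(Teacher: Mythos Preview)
Your proof is correct and follows essentially the same approach as the paper's. The paper argues more tersely: assuming $r\mid|N|$ and $r\mid|G/N|$, it asserts $\mu_r(G/N)\ge r/|R|$ (which is exactly your computation $\mu_r(G/N)=t_2/u=t_2 s/|R|\ge r/|R|$), then invokes Lemma~2.1 and drops the $\mu_r(N)/|G:N|$ term to reach the contradiction. Your version makes the Sylow decomposition $|R|=su$ explicit and retains the extra positive term $t_1/m$, which is harmless but unnecessary.
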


    \begin{proof}
        Suppose $r$
        divides the order of $N$. Now  assume that $r$ divides the order
        of $G/N$. Then, we can deduce that $\mu _r(G/N)\geq
        {r}/{{|R|}}$. Observe that Lemma 2.1 gives $\mu_r(G)\geq
        \mu_r(G/N)+\mu_r(N)/|G:N|$, and so
        $\mu_r(G)\geq  {r}/{|R|}$, which contradicts our
        hypothesis that $\mu_r(G)<{r}/{|R|}$. It follows that  $r\nmid |G/N|$.
    \end{proof}

    \begin{lemma} Let $p$ be a prime and $q=p^f$. Set  $r\in \pi
        (\PSL(2,q))$. Then
        $\mu_p(\PSL(2,q))$ $={1}/{q}$ if $p=2$, and $\mu_p(\PSL(2,q))=
        {2}/{q}$ if $p$ is odd. Moreover, if $r=2$ and $2|{(q\pm
            1)}/{2}$, then $\mu_r(\PSL(2,q))={1}/{2}-{1}/{(q\pm 1)_r}$; if
        $r\neq 2$ and $r|{(q\pm 1)}/{\gcd(2,q-1)}$, then
        $\mu_r(\PSL(2,q))={1}/{2}-{1}/({2(q\pm 1)_r})$.

    \end{lemma}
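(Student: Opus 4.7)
The plan is to apply Lemma 2.6, which reduces the computation of $\mu_r(\PSL(2,q))$ to a sum over $G$-conjugacy class representatives $x_1,\dots,x_k$ of nontrivial $r$-elements of the quantities $1-\mu_r(C_G(x_i))$. I will use the standard subgroup structure of $\PSL(2,q)$: an elementary abelian Sylow $p$-subgroup of order $q$; two (up to conjugacy) cyclic maximal tori $T^+,T^-$ of orders $(q-1)/d$ and $(q+1)/d$ with $d=\gcd(2,q-1)$; and each $T^{\pm}$ inverted by a Weyl involution so that $|N_G(T^{\pm})/T^{\pm}|=2$.

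For $r=p$: every nontrivial $p$-element is unipotent, and its centralizer in $\PSL(2,q)$ is the containing Sylow $p$-subgroup, a group of order $q$ made up of $p$-elements; hence $1-\mu_p(C_G(x))=1/q$. There is one $G$-conjugacy class of nontrivial unipotents when $p=2$ and two when $p$ is odd (the classical splitting by squares in $\mathbb{F}_q^{\times}$), giving $\mu_p=1/q$ and $\mu_p=2/q$ respectively.

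For $r\neq p$ odd: such $r$ divides exactly one of $(q-1)/d$, $(q+1)/d$; write $T$ for the corresponding torus, of order $(q-\vep)/d$ with $\vep\in\{\pm1\}$. Every nontrivial $r$-element is $G$-conjugate into $T$, with $C_G(x)=T$ cyclic, so $1-\mu_r(T)=1/|T|_r=1/(q-\vep)_r$. Since $r$ is odd, no such element equals its own inverse, so the Weyl involution pairs the $(q-\vep)_r-1$ nontrivial $r$-elements of $T$ into $((q-\vep)_r-1)/2$ $G$-classes, and Lemma 2.6 gives $\mu_r=\tfrac{1}{2}-\tfrac{1}{2(q-\vep)_r}$.

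For $r=2$ with $q$ odd: the hypothesis $2\mid(q-\vep)/2$ forces $4\mid q-\vep$. The involutions of $\PSL(2,q)$ form a single $G$-class with centralizer a dihedral group $D$ of order $q-\vep$; counting the odd-order rotations in $D$ gives $1-\mu_2(D)=1/(q-\vep)_2$. The other nontrivial $2$-elements lie in $T:=T^{\vep}$ (cyclic of order $(q-\vep)/2$) with centralizer $T$, so $1-\mu_2(T)=2/(q-\vep)_2$; the unique involution of $T$ is Weyl-fixed while the remaining $(q-\vep)_2/2-2$ nontrivial $2$-elements pair up into $(q-\vep)_2/4-1$ additional $G$-classes. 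Lemma 2.6 then yields
\[
\mu_2(\PSL(2,q))=\frac{1}{(q-\vep)_2}+\Bigl(\frac{(q-\vep)_2}{4}-1\Bigr)\frac{2}{(q-\vep)_2}=\frac{1}{2}-\frac{1}{(q-\vep)_2}.
\]
The delicate point will be this last case, where the dihedral centralizer of the involution must be handled separately from the toral centralizer of the remaining $2$-elements, and one must correctly account for the unique Weyl fixed point when counting $G$-classes of $2$-elements in $T$.
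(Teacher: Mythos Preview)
Your argument is correct, but it follows a genuinely different route from the paper's proof. The paper proceeds by direct element counting: it quotes Huppert's formulas (Chap.~2, Theorems 8.2--8.5) giving the exact number $S_d$ of elements of each order $d$ in $\PSL(2,q)$, namely $S_d=q(q\mp1)\varphi(d)/2$ for $d\mid(q\pm1)/\gcd(2,q-1)$, and then sums $\sum_{r\mid d} S_d$ using $\sum_{d\mid n}\varphi(d)=n$ to obtain the number of $r$-singular elements in closed form. No centralizer analysis or conjugacy-class counting is needed; the answer drops out of a single telescoping sum and covers the cases $r=2$ and $r$ odd uniformly.

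Your approach instead feeds the centralizer structure of $\PSL(2,q)$ into Lemma~2.6. This is more structural and has the pleasant feature of exercising Lemma~2.6, but it costs you the extra bookkeeping in the $r=2$ case (separating the involution with its dihedral centralizer from the higher $2$-elements with toral centralizer, and tracking the Weyl fixed point). Both methods ultimately rest on the same classical description of conjugacy in $\PSL(2,q)$; the paper just pulls the finished element-count off the shelf, whereas you rebuild it via centralizers. Either is a perfectly good proof of the lemma.
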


    \begin{proof}
        By theorems \cite[Theorem 8.2, 8.3, 8.4, 8.5, Chap.2]{H}, we know that the number
        of $p$-singular elements of $\PSL(2,q)$ is equal to the number of elements of
        order $p$. Moreover, it is $q^2-1$. Hence,
        $\mu_p(\PSL(2,q))={1}/{q}$ if $p=2$, and $\mu_p(\PSL(2,q))=
        {2}/{q}$ if $p$ is odd. Next we consider the cases of values of
        $\mu$ for distinct from the characteristic $p$.
        Similarly,
        Theorem 8.4 in \cite{H} asserts
        that if  $d \,(\neq 1)$ is a divisor of ${(q\pm 1)}/{\gcd(2,q-1)}$,
        then the number $S_d$ of elements of order $d$ in $\PSL(2,q)$ equals
        ${q(q\mp 1)\varphi(d)}/{2}$, where $\varphi$ is Euler's totient function.   Suppose that $r$ divides
        ${(q\pm 1)}/{\gcd(2,q-1)}$, and let  $(q\pm 1)/{\gcd(2,q-1)}={r^t}\cdot m$ with $\gcd(r,m)=1$. Hence the number of
        $r$-singular elements equals 
\begin{align*}
        \sum_{r\mid d\mid
            r^tm}S_d&=\sum_{d\mid r^tm}S_d-\sum_{r\nmid d\mid
            r^tm}S_d=\sum_{d\mid r^tm}S_d-\sum_{d\mid m}S_d\\&
        ={q(q\mp 1)}(\sum_{d\mid
            r^tm}\varphi(d)-\sum_{d\mid m} \varphi(d))/2={q(q\mp
            1)}(r^tm-m)/2\\&
            ={q(q\mp
            1)}({(q\pm 1)}/{\gcd(2,q-1)}-{({(q\pm
                1)}/{\gcd(2,q-1)})_{r'}})/2\\&
            =|PSL(2,q)|({1}/{2}-{1}/{(2({(q\pm
                1)}/{\gcd(2,q-1)})_r})).
        \end{align*}
It follows that
        $\mu_r(\PSL(2,q))={1}/{2}-{1}/{(2({(q\pm
                1)}/{\gcd(2,q-1)})_r)}.$
    \end{proof}

    \begin{lemma} Suppose that $\mu(G)=\mu(\PSL(2,q))$ with $q\geq 4$.
        Then $G$ is perfect.
    \end{lemma}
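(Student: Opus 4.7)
I would argue by contradiction. Suppose $G\ne G'$, so the abelianization $G/G'$ is nontrivial; pick a prime $r$ dividing $|G/G'|$. By Remark~2.5, $|G|=|\PSL(2,q)|$, hence $r\in\pi(\PSL(2,q))$ and $\mu_r(\PSL(2,q))$ is given explicitly by Lemma~2.8.

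The key observation is that for any abelian group $A$ the decomposition $A=A_r\times A_{r'}$ makes the $r$-regular elements of $A$ exactly $A_{r'}$, so $\mu_r(A)=1-1/|A|_r$. Applied to the abelian group $G/G'$, whose order is divisible by $r$, this yields $\mu_r(G/G')\ge (r-1)/r$. Feeding this into Lemma~2.1 with $N=G'$ gives
\[
\mu_r(G)\;\ge\;\mu_r(G/G')+\frac{\mu_r(G')}{|G:G'|}\;\ge\;\frac{r-1}{r}.
\]

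To close the argument I would verify that $\mu_r(\PSL(2,q))<(r-1)/r$ for every $r\in\pi(\PSL(2,q))$ and every $q\ge 4$; combined with $\mu_r(G)=\mu_r(\PSL(2,q))$ this contradicts the displayed inequality. Lemma~2.8 reduces this to a short case check: when $r\ne p$ its formulas give $\mu_r(\PSL(2,q))<1/2\le (r-1)/r$ since $r\ge 2$; when $r=p$ they give $\mu_p(\PSL(2,q))\in\{1/q,\,2/q\}$, which is strictly smaller than $(p-1)/p$ for every $q\ge 4$. I expect the only real obstacle to be the boundary verification when $r=p=2$, $q=4$, where $\mu_2(\PSL(2,4))=1/4<1/2=(r-1)/r$ only just succeeds; beyond that the argument is purely routine arithmetic built on the interplay of Lemma~2.1, Lemma~2.8, and the abelian-group computation above.
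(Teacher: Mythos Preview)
Your argument is correct and follows the same route as the paper: pass to the abelianization, combine Lemma~2.1 with the computation $\mu_r(A)\ge 1-1/r$ for an abelian group $A$ with $r\mid|A|$, and contradict the explicit values in Lemma~2.8. The paper streamlines the endgame by using the single threshold $1/2$: since $1-1/r\ge 1/2$ for every prime $r$, while Lemma~2.8 shows $\mu_s(\PSL(2,q))<1/2$ for \emph{every} $s\in\pi(\PSL(2,q))$, the contradiction is immediate and your case split $r=p$ versus $r\ne p$ becomes unnecessary.

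One wrinkle in your write-up is worth tightening. You invoke ``$\mu_r(G)=\mu_r(\PSL(2,q))$'', but the hypothesis is only an equality of \emph{sets} $\mu(G)=\mu(\PSL(2,q))$. That this forces prime-by-prime agreement is true---by Lemma~2.4 the reduced denominator of $\mu_r$ is the Sylow $r$-order, so the prime $r$ can be read off from the value---but it should be said. Alternatively, your own case check already shows that every element of $\mu(\PSL(2,q))$ lies strictly below $1/2\le (r-1)/r$, so you can bypass the issue entirely, exactly as the paper does.
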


    \begin{proof}
        We assume that $G$ is not perfect, and let $r$ be a prime
        divisor of $|G/G'|$. By applying Lemma 2.1 we have $\mu_r(G)\geq
        \mu_r(G/G')\geq \mu_r(Z_r)=1-{1}/{r}\geq {1}/{2}$, which contradicts the
        fact of Lemma 2.8 that $\mu_r(\PSL(2,q))<{1}/{2}$ for all $r$. Hence,
        $G$ is perfect. \end{proof}

    \begin{lemma}(\cite[Lemma 1]{WJH})
        Let $S$ be a non-abelian simple group with an abelian Sylow
        $2$-subgroup.  Then $S$ is one of the following groups:
        \begin{enumerate}
            \item $\PSL(2,q)$ where $q>3$ and $q \equiv 3,5\bmod\,8$ or $q=2^m$ for $m\geq
            2$,
            \item $J_1$,
            \item ${}^2G_2(3^{2t+1})$ where $t\geq  1$.
        \end{enumerate}
    \end{lemma}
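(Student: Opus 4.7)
The plan is to recognize this statement as Walter's classification theorem of simple groups with abelian Sylow $2$-subgroups (J.H. Walter, Ann.\ Math.\ 1969), and give a short modern proof relying on the classification of finite simple groups (CFSG). Walter's original argument predates CFSG and is several hundred pages; with CFSG in hand we can proceed by a case analysis across the known families.

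First I would dispatch the alternating groups. For $n \geq 6$ the group $A_n$ contains $A_6$, whose Sylow $2$-subgroup is dihedral of order $8$, hence non-abelian. For $n=5$, a Sylow $2$-subgroup of $A_5$ is the Klein four-group, and $A_5 \cong \PSL(2,4) \cong \PSL(2,5)$, so this case is already absorbed into family (1). Second, I would handle the sporadic groups by direct inspection of character tables (or of the known Sylow $2$-structure in the ATLAS): among all $26$ sporadic groups, only $J_1$ has an (elementary) abelian Sylow $2$-subgroup, of order $8$.

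The main task is the Lie type groups, which I would split by characteristic. In defining characteristic $2$, a Sylow $2$-subgroup is the full unipotent radical $U$ of a Borel subgroup; by standard Chevalley commutator relations $U$ is abelian exactly when the positive root system supports no nontrivial commutator, which forces rank one. This leaves only $\PSL(2,2^m) \cong \text{SL}(2,2^m)$ (with $m\geq 2$, since $m=1$ is solvable), where $U$ is elementary abelian of order $2^m$; rank-one twisted groups in characteristic $2$ (Suzuki ${}^2B_2(2^{2t+1})$) have non-abelian $U$. In odd characteristic, one controls a Sylow $2$-subgroup through the centralizer of a maximal $2$-torus. For $\PSL(2,q)$, $q$ odd, the Sylow $2$-subgroup is dihedral of order $(q^2-1)_2/\gcd(2,q-1)$; this is abelian (Klein four) precisely when $(q^2-1)_2 = 8$, i.e.\ when $q \equiv 3,5 \pmod 8$. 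For every other Lie type group in odd characteristic (rank $\geq 2$ classical, exceptional), a Sylow $2$-subgroup contains a copy of $\text{SL}(2,q)$-type or $D_8$ inside the Weyl group contribution and is non-abelian; the one exception is the Ree family ${}^2G_2(3^{2t+1})$, where the Sylow $2$-subgroup is known to be elementary abelian of order $8$ (its normalizer is $2^3 {:}\, 7{:}\,3$).

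The main obstacle is the last step: ruling out abelian Sylow $2$-subgroups in all Lie type groups of rank $\geq 2$ in odd characteristic, and identifying the Ree groups as the sole non-$\PSL(2,q)$ exception. I would handle this uniformly by exhibiting, in each such family, either an embedded $\text{SL}(2,q_0)$ with $q_0 \equiv \pm 1 \pmod 8$ (which supplies a dihedral Sylow $2$-subgroup of order $\geq 8$) or a Weyl-group involution pair generating a non-abelian $2$-subgroup; the Ree case survives because its root system is twisted so that the usual Weyl-involution construction collapses. Combining the four family analyses yields exactly the three classes listed in the lemma.
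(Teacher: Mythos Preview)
The paper does not prove this lemma at all: it is stated with the citation ``\cite[Lemma 1]{WJH}'' to Walter's 1969 paper and no argument is given. Your proposal therefore supplies content the paper simply imports as a black box.

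As a CFSG-based verification your outline is sound, and the alternating, sporadic, and characteristic-$2$ cases are handled correctly. The odd-characteristic Lie case, however, has two soft spots. First, your trichotomy ``$\PSL(2,q)$ / rank $\geq 2$ classical / exceptional'' leaves the rank-one unitary groups $\PSU(3,q)$ for odd $q$ unaddressed; their Sylow $2$-subgroups have order $(q-1)_2(q+1)_2^{2}\geq 16$ and are indeed non-abelian, but this should be said explicitly. Second, the mechanism you propose for the ``main obstacle''---exhibiting an embedded $\mathrm{SL}(2,q_0)$ with $q_0\equiv \pm 1\pmod 8$, or a non-commuting pair of Weyl-group involutions---is asserted rather than executed, and neither alternative is automatic: for small $q$ the required $q_0$ may not exist inside the group, and Weyl involutions need a careful analysis of the torus normalizer before they yield a non-abelian $2$-subgroup of $S$ itself. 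A cleaner route, if you want a genuine proof rather than a plausibility sketch, is to quote for each Lie family the explicit Sylow $2$-structure already tabulated in the literature (Carter--Fong for classical groups, Gorenstein--Lyons--Solomon for the exceptional ones) and read off non-abelianness directly.
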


 \begin{lemma}
    Let $n$ be a positive integer. If $n\ge 6$, then $n!>2(n + 1)^{{n}/{2}}$.
\end{lemma}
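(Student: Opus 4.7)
The plan is to proceed by induction on $n$, with $n=6$ as the base case. For the base case a direct computation gives $6!=720$ and $2\cdot 7^{3}=686$, so $720>686$ as required.

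For the inductive step, suppose $n!>2(n+1)^{n/2}$ for some $n\geq 6$. Multiplying both sides by $n+1$ yields
\[
(n+1)! = (n+1)\cdot n! > 2(n+1)\cdot (n+1)^{n/2} = 2(n+1)^{(n+2)/2},
\]
so it will suffice to establish the auxiliary inequality
\[
(n+1)^{(n+2)/2}\;\geq\;(n+2)^{(n+1)/2},
\]
which after squaring amounts to $(n+1)^{n+2}\geq (n+2)^{n+1}$, equivalently
\[
n+1 \;\geq\; \left(1+\frac{1}{n+1}\right)^{n+1}.
\]

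To verify this I would invoke the classical bound $(1+1/m)^{m}<e<3$ valid for every positive integer $m$; since $n+1\geq 7$ in our range, the inequality is comfortable. Equivalently one can use the fact that $m\mapsto m^{1/m}$ is strictly decreasing on $[3,\infty)$, applied with $m=n+1$ and $m=n+2$.

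The only non-bookkeeping step is the auxiliary inequality $(n+1)^{n+2}\geq (n+2)^{n+1}$, but this is a standard consequence of the monotonicity of $(1+1/m)^{m}$, so I do not anticipate a genuine obstacle. The rest of the argument is a routine induction.
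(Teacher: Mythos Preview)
Your proof is correct and follows essentially the same route as the paper: induction with base case $n=6$ checked numerically, the inductive step reduced to the auxiliary inequality $(n+1)^{n+2}\ge(n+2)^{n+1}$, and that inequality verified via the classical bound $(1+1/m)^m<e$. Your presentation is in fact slightly cleaner than the paper's, which appeals to $\lim_{m\to\infty}(1+1/m)^m=e$ without explicitly noting the monotonicity needed to conclude $(1+1/m)^m<e$.
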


\begin{proof} If $n=6$,  the inequality $n!>2(n + 1)^{{n}/{2}}$
holds by checking directly.  Now, assuming that the inequality is valid for
$n=k\geq 6$, we will proceed by induction to hold for $n=k+1$. First, we claim that
$$(k+1)^{k+2}>(k+2)^{k+1}\eqno (2.1)$$ for $k\geq 2$. In fact,
using the well-known equality
$\lim_{k\rightarrow\infty}(1+{1}/{(k+1)})^{k+1}=e$, where $e$ is
the base of the natural logarithm, it follows that $(1+{1}/{(k+1}))^{k+1}<e$.
Thus,  (2.1) holds for  $k\geq 2$.
 So,  by the induction and  (2.1), we have
$(k+1)!=k!(k+1)>2(k+1)^{{k}/{2}}(k+1)>2(k+2)^{{(k+1)}/{2}}$.
Thus it holds for $n=k+1.$
\end{proof}

    \section{Proof of Theorem}

    Let $p$ be the characteristic of $\PSL(2,q)$ and $q=p^f$.  By Remark
    2.5, if  $\mu (G) = \mu (\PSL(2,q))$, then
    $|G|=|\PSL(2,q)|$. Denote by $N$ the largest $p'$-normal subgroup in
    $G$. By Lemma 2.2 and 2.8, we have $\mu_p (G/N)={1}/{q}$ if $p=2$, and
    $\mu_p (G/N)={2}/{q}$ if $p$ is odd.  Hence, the orders of minimal
    normal subgroups of $G/N$ must necessarily have a factor of $p$.
    Moreover, if $G/N$ has two different minimal normal subgroups
    $N_0/N$ and $N_1/N$, then both orders of $G/N_0$ and $N_0/N$ have
    the divisor $p$, which contradicts Lemma 2.7. So $G/N$ has unique
    minimal normal subgroup, say $N_0/N$.

    \textbf{Claim 1.} \textit{$G/N$ is a simple group.}

    Since $\mu_p(G/N)={1}/{q}$ or ${2}/{q}$, by applying Lemma 2.6, there
    exist at most
     two conjugate classes of non-trivial $p$-elements in
    $G/N$.

    \textbf{Case I.}  \textit{$G/N$ has one conjugate class of non-trivial $p$-elements.}

    If $N_0/N$ is non-solvable, say $S\times \cdots\times S$ with $l$
    copies, and $S$ is a non-abelian simple group, then $S^l\leq G/N\leq
    \Aut(S^l)$. We choose an element $x$ of order $p$ in $S$. Since $\Aut(S^l)\cong \Aut(S)\wr Sym(l)$,  the elements $(x,1,...,1)$ and $(x,x,...,x)$ of $G/N$ are not $\Aut(S^l)$-conjugate if $l\geq 2$.
    So $l=1$, and then $G/N$ is an almost simple group. It is well-known
    that the outer automorphism group of a finite simple group is
    solvable (this is Schreier's Conjecture, see e.g. p.151 of \cite{KS}), by applying Lemma 2.10, we have that $G/N$ is simple.

    If $N_0/N$ is solvable, let $Z_p^l$, by Lemma 2.7, then $N_0/N$ is a Sylow $p$-subgroup of $G/N$. Applying Schur-Zassenhaus Theorem, $G/N$ has a $p$-complement $\overline{H}$. Now we assume that $\mu_p(G/N)={1}/{q}$. Certainly, in this case $p=2$. By Lemma 2.1,   $\mu_p(G/N)\geq \mu_p((G/N)/Core_{G/N}(\overline{H}))\geq
    \mu_p(Sym({|G/N:\overline{H}|}))=\mu_p(Sym({q}))\geq {1}/{q},$ and then
    $\mu_p((G/N)/Core_{G/N}(\overline{H}))=\mu_p(S_{q})={1}/{q}.$ Since
    $G/N$ has unique minimal normal subgroup $N_0/N$, it follows $Core_{G/N}(\overline{H})=1$. By applying a theorem of Isaacs, Kantor and Spaltenstein (see \cite{IKS}), we get that $G/N$ is sharply
    2-transitive. In the 1930’s Zassenhaus \cite{Z} classified the finite sharply 2-transitive groups based on near-fields.  Moreover, the structure of finite near-fields were researched by Ellers and Karzel (see Satz 1 of \cite{EK}), it leads $G/N$ is solvable, which contradicts the fact that $G$ is perfect in Lemma 2.10.

    Next, we consider the case of $\mu_p(G/N)={2}/{q}$. Let $x$ be an element of order $p$ in $N_0/N$. By Lemma 2.6,
    $\mu_p(G/N)=1-\mu_p(C_{G/N}(x))$, and then $\mu_p(C_{G/N}(x))=1-{2}/{q}$. Note that $C_{G/N}(x)$ still has a $p$-complement, say $\overline{H}_1$, which is a subgroup of $\overline{H}$. Then $C_{G/N}(x)\cong Z_p^f\rtimes \overline{H}_1.$ Hence the number of $p$-regular elements in $C_{G/N}(x)$ is $2|\overline{H}_1|$. We can let the number of $p'$-Hall subgroups in $C_{G/N}(x)$ be $p^t$ with $t\geq 1$. Otherwise,  $t=0$ leads to $C_{G/N}(x)\cong Z_p^f\rtimes \overline{H}_1$, hence $\mu_p(C_{G/N}(x))=1-{1}/{q}$, a contradiction. So number of $p$-regular
    elements is not less than $p^t|\overline{H}_1-M|+|M|$, where $M$ is a maximal subgroup of $\overline{H}_1$. Furthermore,
    $p^t|\overline{H}_1-M|+|M|=|M|+ p^t(|\overline{H}_1|-|M|)\geq
    {|\overline{H}_1|}/{2}+p^t(|\overline{H}_1|-{|\overline{H}_1|}/{2})
    $. If $p^t>3$, then $p^t|\overline{H}_1-M|+|M|>2|\overline{H}_1|$, a
    contradiction. Thus we have $p^t=3$ since $p$ is odd. So if
    the number of $p$-regular elements in $C_{G/N}(x)$ is
    $2|\overline{H}_1|$, then $p=3$ and $C_{G/N}(x)\cong (Z_p^f\times
    \overline{H}_2)\rtimes Z_2$ with $\overline{H}_2\leq
    \overline{H}_1$. It follows that $\overline{H}_2\subseteq
    C_{G/N}(N_0/N)$, applying $N_0/N$ is the unique minimal normal subgroup
    of $G/N$, we have $\overline{H}_2=1$. Thus $C_{G/N}(x)\cong
    Z_3^f\rtimes Z_2$ and $|Z_3^f:N_{Z_3^f}(Z_2)|=3$, and then
    $C_{G/N}(x)\cong Z_3^{f-1}\times Sym(3)$.  Since all elements of order
    $3$ are conjugate, we have $3^f-1=|G/N:C_{G/N}(x)|$, and then
    $|G/N|=2\cdot 3^f(3^f-1)$. So $|G/N_0|=2(3^f-1)$ and
    $|N|={(3^f+1)}/{4}$. This implies $8\nmid |G/N_0|$.
    Since $G$ is perfect, we have $G/N_0$ is non-solvable. Moreover,
    $G/N_0$ has a composition factor whose Sylow 2-subgroup has order 4.
    Using Walter's classification (see Lemma 2.10), we get the order of such factor
    has a prime divisor 3, which contradicts $3\nmid |G/N_0|$.

    \textbf{Case II.}  \textit{$G/N$ has two conjugate classes of
        non-trivial $p$-elements.}

    In this case it is necessary to  $\mu_p(G/N) = {2}/{q}$. As per
    Lemma 2.6, there exist two conjugacy classes of  non-trivial $p$-elements, given
    by
    $$
    \frac{2}{q}=\sum\limits_{i=1}^2{[1-{\mu_
            p}({C_{G/N}}({x_i}))]},\eqno (3.1)$$
    where ${x_1, x_2}$ are representatives of $G$-conjugacy classes of $p$- elements $\neq 1$ in $G$. Then ${\mu_ p}({C_{G/N}}({x_i})) = 1 - {1}/{q}$.  First
    we assume that $N_0/N \cong Z_p \times \cdots \times Z_p$. Then
    similarly we can express $G/N$ as a semidirect product of $N_0/N$
    and a complement subgroup $\overline H$, such that $G/N \cong
    {Z_p}^f\rtimes \overline H$. Consequently, ${Z_p}^f$ is normal Sylow
    $p$-subgroup of $G/N$.
    By Lemma 2.3,  We  conclude that ${C_{G/N}}({x_i})$ is a $p$-nilpotent group,
    and that ${C_{G/N}}({x_i}) \cong {Z_p}^f \times \overline
    {H}_i$.
    Hence, $\langle {{{{N_0}} / N},\overline {H}_1 ,\overline {H}_2 } \rangle
    \subseteq {C_{G/N}}({N_0}/N) \subseteq {C_{G/N}}({x_i}).$ It follows
    that ${C_{G/N}}({N_0}/N) = {C_{G/N}}({x_1}) = {C_{G/N}}({x_2}) \cong
    {Z_p}^f\times \overline {H}_1. $ Since $\overline {H}_1$  is a
    characteristic subgroup of ${C_{G/N}}({N_0}/N)$,  $\overline
    {H}_1$ is normal in $G/N$, and then $\overline {H}_1=1$. So $\overline H$ acts fixed-point freely by conjugation on
    $N_0/N$, then $G/N$ is a Frobenius group. By Lemma 2.9, we know
    $G/N$ is a non-solvable Frobenius group.
    Referring to \cite[Section 3.4]{DM},
    there exists a subgroup $\overline{H}_0\leq \overline{H}$ such that
    $\overline{H}_0\cong Z\times \SL(2,5)$, with
    $\big|\overline{H}:\overline{H}_0\big|\leq 2$. Since $G$ is perfect
    and $G/N$ has unique minimal normal subgroup, we can get $G/N\cong
    Z_p^f\rtimes \SL(2,5)$. But it is easy to compute
    $\mu_2(\SL(2,5))={5}/{8}$, which contradicts
    $\mu_2(\SL(2,5))=\mu_2(G/N_0)\leq \mu_2(G)<{1}/{2}$.

    Secondly, assume that ${N_0}/N$ is non-solvable, as in Case I,
    let ${N_0}/N\cong S^l$. Hence, we have $p\big|{|S|}$. Moreover,
     $S^l\leq G/N\leq
    Aut(S^l)$. The same as Case I, since $G/N$ has two conjugacy classes of
        non-trivial $p$-elements, it leads $l\leq 2$. This implies $(G/N)/S^l$ is solvable.
By Lemma 2.9,  $G$ is perfect, so that $G/N\cong S$ or
$S^2$. Now suppose that $G/N\cong S^2$. Then
$\mu_p(G/N)=1-(1-\mu_p(S))^2$, it follows $1-\mu_p(S)=\sqrt{1-2/q}$,
which is irrational, a contradiction. Thus $G/N$ is simple.

  \textbf{Claim 2.} \textit{The Sylow $p$-subgroups of $G/N$ are abelian.}

Assume that there exists non-trivial $p$-element $x$ in $G$ such
that $|C_G(x)|_p<|P|$. Then $\mu_p(G)=\sum_{x\in
X}(1-\mu_p(C_G(x)))>p/|P|$, which contradicts our hypothesis that
$\mu_p(G)=1/q$ or $2/q$. So every $p$-element of $G$ is $p$-central.
By Theorem B of \cite{NST}, and  $G/N$ is simple, we have Sylow
$p$-subgroup of $G/N$ is abelian or $G/N$ is isomorphic to Ru, $J_4$
or $^2F_4(2^{2m+1})'$ (with $9\nmid (2^{2m+1}+1)$) and $p=3$; or
$G/N\cong Th$ and $p=5$. With the aid of the ATLAS (see \cite{CCN}),
we know $\mu_3(Ru)=8/27, \mu_3(J_4)=8/27, \mu_3(^2F_4(2)')=7/27$ and
$\mu_5(Th)=24/125$. By Table 2 of \cite{GS},
$\mu_3(^2F_4(2^{2m+1}))=1-(89/144+1/4(2^{2m+1}+1)_3+1/48(2^{2m+1}+1)^2_3)$.
It is easy to check all values of $\mu$ above are not equal to $1/q$
or $2/q$.  Thus the Sylow $p$-subgroups of $G/N$ are abelian.

    \textbf{Claim 3.}  \textit{$G\cong \PSL(2,q)$ for even $q>2$.}

    Let the exponent of Sylow 2-subgroup of $G$ be $2^e$. Since the
    number $S_{2^e}(G)$ of elements in $G$ whose order is divisible by
    $2^e$ has divisor $|G|_{2'}$ (see \cite[Theorem 3]{W}), we have
    $(2^f+1)(2^f-1)\big|S_{2^e}(G)$. On the other hand, since $\varphi
    (2^e)\big|S_{2^e}(G)$, it follows  $2^{e-1}(2^f+1)(2^f-1)\left|
    {S_{2^e}(G)}\right.$. By applying Lemma 2.9, we get the number
    $S_2(G)$ of 2-singular elements of $G$ equals $(2^f+1)(2^f-1)$.
    However,
    $S_2(G)\ge S_{2^e}(G)\ge 2^{e-1}(2^f+1)(2^f-1)$,
    hence $e=1$, and so $G$ has an elementary abelian Sylow 2-subgroup.
    Certainly, $G/N$ has an elementary Sylow 2-subgroup too. Since $G/N$
    is isomorphic to a non-abelian simple group again, we can use
      Walter's classification (see Lemma 2.10) to split the proof into the following
    three cases.

    \textbf{Case I.}  \textit{$G/N\cong \PSL(2,q_1)$, where $q_1>3$ and
        $q_1 \equiv 3,5\mod\,8$ or $q_1=2^m$ with $m\geq  2$.}

    When $q_1 \equiv 3,5\mod\,8$, we have $8\nmid {(q_1 \pm1)_2}$, and
    then ${(q_1 \pm1)_2}=4$. By applying Lemma 2.4 it yields ${\mu
        _2}(\PSL(2,q_1))= {1}/{2}-{1}/{(q_1 \pm1)_2}={1}/{4}$.
    Moreover,  ${\mu _2}(G)={1}/{2^f}$ and ${\mu _2}(G)={\mu
        _2}(G{\rm{/}}N)$, and therefore, $f\leq 2$. Thus, we obtain ${\mu
    }(G)={\mu }(\PSL(2,4))$, and then $|G|=60$. Since $G$ is perfect,  $G\cong \PSL(2,4)$. Next if $q_1=2^m$ where $m\geq 2$,
     observe that ${\mu _2}(\PSL(2,q_1)) ={1}/{2^m}$ by Lemma 2.8.
    However, since ${\mu _2}(G) ={1}/{2^f}$ and
    ${\mu_2}(G)={\mu_2}(G/N)$, it follows that $m=f$. As
    $|G/N|=|\PSL(2,q_1)|=2^m(2^m+1)(2^m-1)$ and
    $|G|=|\PSL(2,p^f)|=2^f(2^f+1)(2^f-1)$, we obtain $|G|=|G/N|$, which
    implies $G\cong \PSL(2,2^f)$.

    \textbf{Case II.} $G/N\cong J_1$.

    Referring to Table of ATLAS \cite{CCN},
    ${\mu_2}(J_1)={3}/{8}$. However,
    ${\mu_2}(G)={1}/{2^f}$ and ${\mu_2}(G)={\mu_2}(G/N)$, so ${\mu_2}(J_1)={1}/{2^f}$. This leads to a contradiction
    with ${\mu_2}(J_1)={3}/{8}$.

    \textbf{Case III.} \textit{$G/N\cong {}^2G_2(3^{2m+1})$ where $m\geq
        1$.}

    Observe that
    ${\mu_2}({}^2{G_2}({3^{2t+1}}))=1-{7}/{{12}}-{1}/{{6{{({3^{2t+1}}+1)}_2}}}=
    {5}/{{12}}-{1}/{{6{{({3^{2t+1}}+1)}_2}}}$ (see \cite[Table
    11]{GS}). As $8\nmid {(3^{2t+1}+1)_2}$, we conclude that
    ${(3^{2t+1}+1)_2}=4$, and then ${\mu _2}({}^2{G_2}({3^{2t +
            1}}))={3}/{8}$. Note that ${\mu_2}(G)={1}/{2^f}$ and
    ${\mu_2}(G)={\mu_2}(G/N)$. It forces
    ${3}/{8}={1}/{2^f}$, which is a
    contradiction.

    \textbf{Claim 4.}  \textit{$G\cong \PSL(2,q)$ for odd $q>3$.}

    First assume that $G/N$ is isomorphic to $A_n$, where $n \geq  5$.  Since Sylow
    $p$-subgroup of $G$ is abelian,  it follows that $n<p^2$. Therefore,
    $$
    |G/N|_p=|A_n|_p=p^{[{n}/{p}]+[{n}/{p^2}]+\cdots}=p^{[{n}/{p}]}
    \leq p^{{n}/{p}}. \eqno (3.2)$$ Since the Sylow $p$-subgroup of
    $G/N$ is a Sylow $p$-subgroup of $G$, it deduces from (3.2) that
    ${n}/{p}>f$, that is
    $$n>pf.\eqno (3.3)$$
    Furthermore, for $n
    \geq  6$, the inequality $n!>2(n + 1)^{{n}/{2}}$ holds by Lemma 2.12, and combining  (3.3) above, we can obtain that
    $$
    {n!}/{2}>(pf+1)^{{pf}/{2}}. \eqno (3.4)$$
    Assuming that $p \geq  7$, it is easy from (3.4) to see that ${n!}/{2}>(pf+1)^{{pf}/{2}}\geq
    p^{3f}>{p^f(p^{2f}-1)}/{2}, $ which leads  to $|G/N|>|G|$,  a
    contradiction. Assuming $p=5$ and $f\geq 2$, we observe from (3.4)
    that $
    {n!}/{2} >(5f+ 1)^{{5f}/{2}}\geq 5^{3f}> {{{5^f}({5^{2f}} - 1)}}/{2}
    $,  a contradiction. When $p = 5$ and $f=1$,  if $n\geq 6$, then
    $|G/N|={n!}/{2}>{5(5^2-1)}/{2}=|G|$, a contradiction. Next
    when $p=3$ and $f\geq 3$, by the inequality (3.4),
    $
    {n!}/{2}>(3p+1)^{{pf}/{2}}={10^{{3f}/{2}}}>{{{3^f}({3^{2f}}-1)}}/{2},
    $
    a contradiction too.

    For $p=3$ and $f=2$, the formula (3.4) deduces $n=6$, $7$ or $8$.
    When $n=6$, we have ${6!}/{2}$ dividing ${{3^f}({3^{2f}} -1)}/{2}$, yielding $G/N\cong{A_6}$.
    Moreover, since $|G|=|\PSL(2,p^f)|$, it follows that $G\cong \PSL(2,9)$. For $n=7$ or $8$, it turns out that
    ${n!}/{2}>{{{3^2}({3^{4}} - 1)}}/{2}$, a contradiction.

    Lastly we assume that  $G/N\cong A_5$. If $p=3$ and 5, then
    $\mu_3(G)=\mu_3(A_5)={1}/{3}$ and $\mu_5(G)={2}/{5}$, respectively.  But
    $\mu_p(\PSL(2,q))={2}/{q}$, then we have  $p=q=5$. Thus $G\cong
    \PSL(2,5)$.

    Next we suppose that $G/N$ is a simple group of Lie type
    $L(q_0)$.   By using  Artin invariant of
    the cyclotomic factorisation (ref. \cite{A}), the order of $L(q_0)$ has the type
$$|L(q_0)|={d^{-1}}q_0^h\prod_m\Phi_m(q_0)^{e_L(m)},\eqno (3.5)$$ \\ where $d$ is the denominator
and $h$ the exponent given for $L(q_0)$ in Table 1 and Table 2.
\begin{table}[ht]
        \caption{\bf~Cyclotomic factorisation: classical groups of Lie
            type}~~~\begin{tabular}{llll} \hline
            \quad $L$ &$ d$ &$ h$ & $ e_L(x)$ where $x$ is an integer \\[2pt]
            \hline
            $\PSL(n,q_0)$ & $\gcd(n,q_0-1)$   & $n(n-1)/2$& $~~n-1~~~$if $x=1$\\[2pt]
            &&&$~~[\frac{n}{x}]~~~$ if $x>1$ \\[2pt]\hline
            &&&$~~[\frac{n}{\rm{lcm}(2,x)}]$~ if $x\not\equiv 2(\bmod\,4)$\\[2pt]
            $\PSU(n,q_0)$&$\gcd(n,q_0+1)$&$n(n-1)/2$& ~~$n-1$ ~ if $x=2$\\[2pt]
            &&&$~~[\frac{2n}{x}]$ ~ if $2<x \equiv 2(\bmod\,4)$\\[2pt]\hline
            $\PSp(2n,q_0)$ &$\gcd(2,q_0-1)$ &$n^2$&$[\frac{2n}{\rm{lcm}(2,x)}]$\\
            $\OO(2n+1,q_0)$ &&&\\[4pt]\hline

            $\OO^+(2n,q_0)$&$\gcd(4,q_0^n-1)$&$n(n-1)$&$[\frac{2n}{\rm{lcm}(2,x)}]$ ~if
            $x|n$ or $x\nmid 2n$\\[4pt]
            &&& $\frac{2n}{x}-1$~ if $x\nmid n$ and $x| 2n$
            \\[2pt]\hline            $\OO^-(2n,q_0)$&$\gcd(4,q_0^n+1)$&$n(n-1)$&$[\frac{2n}{\rm{lcm}(2,x)}]-1$
            ~if
            $x|n$ \\[2pt]
            &&& $[\frac{2n}{\rm{lcm}(2,x)}]$ ~if $x\nmid n$
            \\[2pt]\hline
        \end{tabular}
    \end{table}
    \begin{table}[ht]
        \caption{\bf~Cyclotomic factorisation: exceptional groups of Lie
            type}~~~\begin{tabular}{lllllllllll}
            \hline
            $m$& $^2B_2$ &$^3D_4$ &$ G_2$ & $ ^2G_2$
            &$F_4$&$^2F_4$&$E_6$&$^2E_6$&$E_7$&$E_8$
            \\\hline
            1&1&2&2&1&4&2&6&4&7&8\\\hline 2&&2&2&1&4&2&4&6&7&8\\\hline
            3&&2&1&&2&&3&2&3&4\\\hline 4&1&&&&2&2&2&2&2&4\\\hline
            5&&&&&&&1&&1&2\\\hline 6&&2&1&1&2&1&2&3&3&4\\\hline
            7&&&&&&&&&1&1\\\hline 8&&&&&1&&1&1&1&2\\\hline
            9&&&&&&&1&&1&1\\\hline 10&&&&&&&&1&1&2\\\hline
            12&&1&&&1&1&1&1&1&2\\\hline 14&&&&&&&&&1&1\\\hline
            15&&&&&&&&&&1\\\hline 18&&&&&&&&1&1&1\\\hline 20&&&&&&&&&&1\\\hline
            24&&&&&&&&&&1\\\hline 30&&&&&&&&&&1\\\hline
            $d$&1&1&1&1&1&1&$\gcd(3,q-1)$&$\gcd(3,q+1)$&$\gcd(2,q-1)$&1\\\hline
            $h$&2&12&6&3&24&12&36&36&63&120\\\hline
        \end{tabular}
\rm{Note that for Table 2, the numbers in the row headings are those integers $m$ for which a cyclotomic polynomial $\Phi_m(q)$ enters into the cyclotomic factorisation in
terms of $q$ of one of the exceptional groups $L(q)$ of Lie type according to the formula
(3.5). The types $L$ are the headings, and the entry in the m-row and the $L$-column is
$e_L(m)$ if non-zero and blank otherwise. }
    \end{table}

By Claim 2,   $L(q_0)$ has an abelian Sylow
    $p$-subgroup. Next we cite
     the classification of simple groups with
    some abelian Sylow subgroups (see the items from (2) to (4) of
    \cite{SZ}).

  \begin{lemma} Let $L(q_0)$ be a finite simple group of Lie type over the Galois field $GF(q_0)$. Suppose that $L(q_0)$ has an abelian Sylow $p$-subgroup and $p$ an odd prime. Then $L(q_0)$ is one of the following groups:
    \begin{enumerate}
        \item a linear simple group $\PSL(2,q_0)$ with $p|q_0$,
        \item  $\PSL(3,q_0)$, $q_0\equiv 4, 7(\bmod\,9)$ with $p = 3$,
        \item  $\PSU(3,q_0 )$, $2 < q_0 \equiv 2, 5(\bmod\,9)$ with $p = 3$,
        \item $L(q_0)$ satisfied  $e_L(mr_m) = 0$ except the above groups of $(1)$ and $(2)$,
        where $p = r_m$ is a primitive prime of $q_0^m -1$, the function $e_L(x)$ is defined in Table $1$ and Table
        $2$.
    \end{enumerate}
\end{lemma}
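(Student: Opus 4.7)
The plan is to split into two regimes according to whether $p$ equals the defining characteristic $p_0$ of $L(q_0)$, writing $q_0 = p_0^k$.

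In the defining-characteristic case $p = p_0$, a Sylow $p$-subgroup of $L(q_0)$ is (up to the centre) the group $U$ of $\mathbb{F}_{q_0}$-rational points of a maximal unipotent subgroup of the corresponding simple algebraic group. The Chevalley commutator relations show that $U$ is abelian if and only if no two positive roots sum to a third positive root, which among the irreducible root systems happens only in type $A_1$. This forces $L(q_0) \cong \PSL(2,q_0)$, giving item (1).

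For the cross-characteristic case $p \nmid q_0$, I would set $m = \mathrm{ord}_p(q_0)$, so that $p$ is a primitive prime divisor of $q_0^m-1$ and $p \mid \Phi_m(q_0)$. A Sylow $p$-subgroup $P$ sits inside the normalizer of a Sylow $\Phi_m$-torus $T_m$, and $|P|$ factors as $(\Phi_m(q_0))_p^{e_L(m)}$ times the $p$-part of the relative Weyl group $W_m = N_{L(q_0)}(T_m)/T_m$. When $p \nmid |W_m|$, the Sylow $p$-subgroup coincides with the $p$-part of $T_m$ and is therefore abelian; by the standard lifting-the-exponent computation with primitive prime divisors, this numerical condition is equivalent to the absence of a $\Phi_{mp}$-factor in (3.5), i.e.\ $e_L(mp) = 0$. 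This absorbs the catch-all item (4). When $p \mid |W_m|$, the extension of $P \cap T_m$ by the $p$-part of $W_m$ is generically non-abelian, and one must inspect the residue by hand: for odd $p$ this narrows to $p = 3$ acting on the rank-$2$ torus in type $A_2$ or ${}^2A_2$, where a direct computation inside $\PSL(3,q_0)$ and $\PSU(3,q_0)$ pins down abelianness to exactly the congruences $q_0 \equiv 4, 7 \pmod 9$ and $q_0 \equiv 2, 5 \pmod 9$ respectively, giving items (2) and (3).

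The main obstacle will be the case-by-case sweep through Tables 1 and 2 verifying that no exceptional or larger-rank family slips past the condition $p \nmid |W_m|$ into the residual regime. Concretely, for each simple type $L$, each odd prime $p$ dividing $|W|$, and each admissible $m$, one has to confirm that the action of the $p$-part of $W_m$ on the $p$-part of $T_m$ really destroys commutativity --- so that aside from $A_2$ and ${}^2A_2$ at $p=3$, the Sylow abelianness genuinely forces $e_L(mp)=0$. This Weyl-group-arithmetic bookkeeping is the technical heart of the argument in \cite{SZ}.
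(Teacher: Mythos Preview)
The paper does not supply its own proof of this lemma: it is quoted verbatim from items (2)--(4) of \cite{SZ}, introduced by the sentence ``Next we cite the classification of simple groups with some abelian Sylow subgroups.'' So there is nothing in the present paper to compare your argument against.

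That said, your outline is a faithful high-level reconstruction of how such classifications are obtained. The defining-characteristic step (Chevalley commutators force $U$ non-abelian outside type $A_1$) is standard and correct. In cross characteristic, the $\Phi_m$-torus/relative-Weyl-group framework you invoke is exactly the right machinery, and the residual $p=3$ cases in types $A_2$ and ${}^2A_2$ are well known. One point to handle with care if you ever write this out in full: the equivalence you assert between $p \nmid |W_m|$ and $e_L(mp)=0$ is morally right but not a one-line consequence of lifting the exponent --- it rests on Springer's regular-element theory relating the exponents of $W_m$ to the degrees of $W$, and in a self-contained write-up you would need either to cite that or to verify the numerics type by type. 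Since you already flag the Weyl-group bookkeeping as the technical heart and defer to \cite{SZ}, your proposal is appropriate for the role this lemma plays here.
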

Next we divide into four cases.

    Case I. $p|q_0$ and $G/N\cong \PSL(2,q_0)$. Since
    $\mu_p(G)=\mu_p(\PSL(2,q))={2}/{q}$ and
    $\mu_p(G/N)=\mu_p(\PSL(2,q_0))={2}/{q_0}$, we have $q=q_0$, and
    thus $G\cong \PSL(2,q)$.

    Case II. $p=3$ and $G/N\cong \PSL(3,q_0)$, where $q_0\equiv
    4,7(\bmod\,9)$. Note that $|\PSL(3,q_0)|_3=9$. So $q=9$, and then
    $|G|=|\PSL(2,9)|=360$. It follows that
    $|G/N|=|\PSL(3,q_0)|={q_0}^3({q_0}^2-1)({q_0}^3-1)/3$ divides
    360, a contradiction.

    Case III. $p=3$ and $G/N\cong \PSU(3,q_0)$,  where $2<q\equiv
    2,5(\bmod\,9)$. Since $|\PSU(3,q_0)|_3=9$, we have $q=9$, and then
    $|G|=|\PSL(2,9)|=360$. It follows that
    $|G/N|=|\PSU(3,q_0)|={q_0}^3({q_0}^2+1)({q_0}^3-1)/3$ divides
    360, a contradiction.

    Case IV. $G/N\cong L(q_0)$  with ${e}_L(mr_m)=0$ neglecting the groups of Case II and III above, where
    $r_m$ is a primitive prime of ${q_0}^m-1$, the function $e_L(x)$ is
    defined in Table 1 and Table 2.

    Since $|G/N|_p=|L(q_0)|_p$, we have
    $$p^f=(\Phi_m(q_0))_p^{e_L(m)}\eqno
    (3.6)$$
    (see \cite [Lemma 3.5] {SZ}). By $|L(q_0)|\big||G|$, we have
    $|L(q_0)|_{p'}\big|{(p^{2f}-1)}/{2}$.
    Thus
    $${d^{-1}}{q_0}^h\prod_{t\neq m}\Phi_t(q_0)^{e_L(t)}\big|{(p^{2f}-1)}/{2}.\eqno (3.7)$$
    Note that ${\Phi_m}({q_0}) < 4{q_0}^{\varphi (m)}$ (see \cite[Lemma
    2.1]{GLN}), thus from (3.6) we have $
    {(p^{2f}-1)}/{2}<{\Phi_m}(q_0)^{2e_L(m)}<(4{q_0}^{\varphi(m)})^{2e_L(m)}$.
    Combining (3.7), it follows that ${d^{-1}}{q_0}^h\prod_{t\neq m}\Phi_t(q_0)^{e_L(t)} $ $< (4{q_0}^{\varphi (m)})^{2e_L(m) } $.
    It is easy to check from Table 1 and  2 that
    ${d^{-1}}q^h\prod_{t\neq m}\Phi_t(q_0)^{e_L(t)}$ $>{q_0}^{h+1}$.
    So
    ${q_0}^{h+1} < (4{q_0}^{\varphi (m)})^{2e_L(m)}<{q_0}^{2(m+1)e_L(m)}$,
    and then
    $$h+1  < 2(m+1)e_L(m).\eqno (3.8)$$

    In the sequel, we discuss case by case.

    Subcase 4.1. $G/N\cong \PSL(n,q_0)$, where $n\geq 2$.

    Using Table 1, we have $e_L(x)=[{n}/{x}]$~ if $x>1$;
    $e_L(x)=n-1$ if $x=1$. When $m>1$,  by the inequality (3.8), it follows that
    ${n(n-1)}/{2}+1 < 2(m+1)[{n}/{m}] \leq 3n$, thus $n<7$.
    Since $2\leq m\leq n$,  the possible pair
    $(n,m)$ is one of $(2,2)$, $(3,3)$, $(3,2)$, $(4,2)$, $(4,3)$, $(4,4)$, $(5,2)$,
    $(5,3)$, $(5,4)$, $(5,5)$, $(6,2)$, $(6,3)$, $(6,4)$, $(6,5)$ and $(6,6)$.
    We will check that  $|\PSL(n,q_0)|>|G|$ in these cases.

    First we consider the case $e_L(m)=1$, i.e. $(n,m) = (2,2)$, $(3,2)$, $(3,3)$, $(4,3)$, $(4,4)$,
    $(5,3)$, $(5,4)$, $(5,5)$,  $(6,4)$, $(6,5)$ and $(6,6)$.
    Now if $(n,m)\neq (2,2)$, $(3,2)$, $(5,5)$ and $(6,5)$,
    then  $|G/N|=|\PSL(n,q_0)|\ge|\PSL(3,q_0)|={q_0}^{3}({q_0}^2 - 1)({q_0}^3- 1)/\gcd(3,{q_0} - 1)$.
    By (3.5), it's evident that ${p^f} = ({\Phi _m}({q_0}))_p\le{q_0}^2+{q_0}+1$.
    So $|G|=|\PSL(2,p^f)|={p^f(p^{2f}-1)}/{2}
    \le{{({q_0}^2+{q_0}+1)(({q_0}^2+{q_0}+1)^2-1)}/{2}}$.
    It's easy to check if $q_0>2$,  then $|G/N|=|\PSL(n,q_0)|>|G|$,
    a contradiction. Now if $q_0=2$, then we also have $|G/N|>|G|$ except $n=3$.
    Certainly, when  $q_0=2$ and
     $n=3$,  $G/N\cong \PSL(3,2)$ and $|G|=|\PSL(2,7)|$, and so $G\cong \PSL(2,7)$.
     If $(n,m)=(2,2)$,
    $(3,2)$,
  then $|G/N|=|\PSL(n,q_0)|\ge {q_0}({q_0}^2 -
    1)/\gcd(2,{q_0} - 1)$.
    Since ${p^f} = ({q_0} + 1)_p$, it leads to $|G|=|\PSL(2,p^f)| $ $<{({q_0}+1)^{3}}$.
    It turns out  $|G/N|=|\PSL(n,q_0)|>|G|$, a contradiction.
    If $(n,m)=(6,5)$ and $(5,5)$,
    then $|G/N|=|\PSL(n,q_0)|\ge{q_0}^{10}({q_0}^2 - 1)({q_0}^3 - 1)({q_0}^4 - 1)({q_0}^5 - 1)/\gcd(5,{q_0} - 1)$,
    and  ${p^f} =({\Phi _5}({q_0}))_p$,
    and so $|G|=|\PSL(2,p^f)| <{({q_0}^4+{q_0}^3+{q_0}^2+{q_0}+1)^{3}}$.
     It's obvious that $|G/N|=|\PSL(n,q_0)|>|G|$, a contradiction.

    Next we prove  the case $e_L(m)=2$, i.e. $(n,m)=(4,2)$, $(5,2)$ and $(6,3)$.
    Note that $|G/N|=|\PSL(n,q_0)|\ge{q_0}^{6}({q_0}^2 - 1)({q_0}^3 - 1)({q_0}^4 - 1)/\gcd(4,{q_0} - 1)$.
    It is clear that ${p^f} =({q_0}^2+{q_0}+1)^2_p$, and so $|G|=|\PSL(2,p^f)|
    <{({q_0}^2+{q_0}+1)^{6}}$. Moreover, since $|G/N|\leq |G|$, we have
    $q_0=2$, and then $p=7$ and $f=2$. So $|G|=|\PSL(2,49)|$, $|G/N|=|\PSL(4,2)|,
    |\PSL(5,2)|$ or $|\PSL(6,2)|$. Obviously, $|G/N|\nmid |G|$, a
    contradiction.

    Finally,  we discuss the case $e_L(m)=3$, that is $(n,m)= (6,2)$.
    It follows from (3.5) that ${p^f} = ({q_0}+1)_p^3$,
    which leads to $| G | < {p^{3f}}\le {({q_0} +1)^{9}}$.
    Since  $|G/N|=| {\PSL(6,{q_0})}| = {q_0}^{15} ({q_0}^2 - 1)\cdots({q_0}^6 - 1)/\gcd(6,{q_0} - 1)>{({q_0} +1)^{9}}$.
    Therefore, we can deduce that  $|G/N|>|G|$, a contradiction.

   If $m=1$, then
   $e_L(m)=n-1$.  We obtain from (3.5) that $p^f= ({q_0} - 1)_p^{n - 1 }$.
   Since $|G|=|\PSL(2,p^f)|$,
   we have ${({p^{2f}} - 1)}/{2} = {({({q_0} - 1)_p^{2(n - 1) }} - 1)}/{2}$. Furthermore,
   $|\PSL(n,q_0)|$ divides $|G|$, it follows
  ${q_0}^{{n(n-1)}/{2}}({q_0}+1)({q_0}^2+{q_0}+1)\cdots({q_0}^{n-1}+\cdots+1)_{p'}/{\gcd(n,{q_0}-1)}  $
   is not greater than ${(({q_0}-1)_p^{2(n - 1) } - 1)}/{2}$.
   Therefore, we obtain
   $
   {q_0}^{{n(n-1)}/{2}} \cdot {q_0}^2 \cdot {q_0}^3 \cdots {q_0}^{n - 1} < {({q_0} - 1)^{2(n - 1)}} < {q_0}^{2(n - 1)},
   $
   which yields ${n(n-1)}/{2}+{(n+1)(n-2)}/{2}<2(n - 1)$.
   Hence, we conclude that $n=2$, and so $G/N \cong \PSL(2, q_0)$.
   Note that $|G/N|={{q_0}({q_0}^2-1)}/{\gcd(2,{q_0}-1)}$ and $|G|=|\PSL(2,p^f)|={({q_0}-1)_p(({q_0}-1)_p^2-1)}/{2}$,
   it shows $|G/N|>|G|$, a contradiction.

    Subcase 4.2. $G/N \cong \PSU(n,q_0)$, where $n \geq 3$.

    In the light of Table 1,  note that $e_L(x)=[{n}/{\rm{lcm}(2,x)}]$~ if $x\not\equiv 2(\bmod\,4)$;
    $e_L(x)=n-1$ if $x=2$; $e_L(x)=[{2n}/{x}]$~ if $2<x\equiv 2(\bmod\,4)$.
    It follows from (3.7) that
    ${n(n-1)}/{2}+1<2(m+1)e_L(m)$. If $m\not\equiv 2(\bmod\,4)$, then ${n(n-1)}/{2}+1<2(m+1)[{n}/{\rm{lcm}(2,m)}]\leq 2(m+1)[{n}/{m}]\leq
    3n$, and so $n<7$. Since  $m\not\equiv 2(\bmod\,4)$,  the possible value $(n,m)$ is one of $(3,1), (4,1), (4,4), (5,1),(5,4)$, $(6,1)$, $(6,3)$, $(6,4)$.
   In the following, it will be proved that
   $|\PSU(n,{q_0})|>|G|$  for the above cases.
   In the first case, $e_L(m)=1$, i.e. $(n,m) =(3,1)$, $(4,4)$, $(5,4)$, $(6,3)$, $(6,4)$.
  This implies  $|G/N|=|\PSU(n,{q_0})|\ge{q_0}^3({q_0}^2-1)({q_0}^3+1)/{\gcd(3,{q_0}+1)}$.
    By (3.5), we have $p^f=(\Phi_m(q_0))_p\leq({q_0}^2+{q_0}+1)$,
    leading to $|G|=|\PSL(2,p^f)|< ({q_0}^2+{q_0}+1)^3$.
    It shows that $|G/N|>|G|$, a contradiction.
   In the second case,  if $e_L(m)=2$, then $(n,m)=(4,1)$ and $(5,1)$.
   It follows $|G/N|=|\PSU(n,{q_0})|\ge{q_0}^6({q_0}^2-1)({q_0}^3+1)({q_0}^4-1)/{\gcd(4,{q_0}+1)}$.
    Moreover, $p^f=(q_0-1)_p^2$,
   hence $|G|=|\PSL(2,p^f)|< ({q_0}-1)^6$.
    It's an easy calculation that $|G/N|>|G|$, a contradiction.
    In the last case, $e_L(m)=3$, that is $(n,m)=(6,1)$.
    According to (3.5), we have $p^f=(q_0-1)_p^3$, so $|G|=|\PSL(2,p^f)|< ({q_0}-1)^9$.
    Furthermore, since $|G/N|=|\PSU(6,{q_0})|={q_0}^{15}({q_0}^2-1)({q_0}^3+1)({q_0}^4-1)({q_0}^5+1)({q_0}^6-1)/{\gcd(6,{q_0}+1)}$,
   we have $|G/N|>|G|$ too,  a contradiction.

   If  $m=2$,  then $e_L(m)=n-1$.
   Since $|\PSU(n,{q_0})|\le|G|< p^{3f}$,  it follows that
   $({q_0}^2-1)({q_0}^3+1)\cdots({q_0}^n-(-1)^n)<({q_0}+1)^{3(n-1)}/{\gcd(n,{q_0}+1)}$,
   that is ${q_0}^{n(n-1)-2}<{({q_0}+1)^{3(n-1)}}$. Therefore, we conclude that $n=3$. For $n=3$,
   $|G/N|={q_0}^3({q_0}^2 - 1)({q_0}^3 + 1)/{{\gcd(3,{q_0} + 1)}} > {({q_0} + 1)^6}>|G|,$
   a contradiction.

    If $2<m\equiv 2(\bmod\,4)$, then $e_L(m)=[{2n}/{m}]$.
    We have ${n(n-1)}/{2}+1<2(m+1)[{2n}/{m}]\leq
    {14}n/3$, and so $n< 11$.
    It follows that the pairs $(n,m)$ is one of $(3,6)$, $(4,6)$, $(5,6)$, $(6,6)$, $(6,10)$, $(7,6)$,
    $(7,10)$, $(7,14)$, $(8,6)$, $(8,10)$, $(8,14)$, $(9,6)$, $(9,10)$, $(9,14)$, $(9,18)$, $(10,6)$, $(10,10)$, $(10,14)$ and $(10,18)$.
     First we discuss the case of $e_L(m)=1$, that is $(n,m)
     =(3,6)$, $(4,6)$, $(5,6)$, $(6,10)$, $(7,10)$, $(7,14)$, $(8,10)$, $(8,14)$, $(9,10)$, $(9,14)$, $(9,18)$,  $(10,14)$ and $(10,18)$.
     If $(n,m)\neq(3,6)$, $(4,6)$ and $(5,6)$, then  $p^f={(\Phi_m({q_0}))_p}\leq {q_0}^6+1$, this leads to
     $| G |=|\PSL(2,p^f)| < {p^{3f}}\le {( {q_0}^6+1)^{3}}$.
     We observe that $|G/N|=|\PSU(n,{q_0})|\ge {q_0}^{15}({q_0}^2-1)({q_0}^3+1)({q_0}^4-1)({q_0}^5+1)({q_0}^6-1)/{\gcd(6,{q_0}+1)}$.
    It is easy to check $|G/N|>|G|$, a
     contradiction. If $(n,m)=(3,6)$, $(4,6)$ and $(5,6)$, we have ${p^f} \le {q_0}^2-{q_0}+1$.
     It follows that $|G/N|=| \PSU(n,q_0)| \ge {q_0}^{3} ({q_0}^2 - 1)({q_0}^3 + 1)/{{\gcd(3,{q_0} + 1)}}>({q_0}^2-{q_0}+1)^3$.
     On the other hand, $| G |=|\PSL(2,p^f)| < {p^{3f}}\le {({q_0}^2-{q_0} +1)^{3}}$, then $|G/N|> |G|$, a contradiction.
     Next when $e_L(m)=2$, i.e. $(n,m) =(6,6)$, $(7,6)$, $(8,6)$, $(9,6)$, $(10,6)$ and $(10,10)$.
     According to (3.5),   $p^f={(\Phi_m({q_0}))_p^2} \leq ({q_0}^4-{q_0}^3+{q_0}^2-{q_0}+1)^2$, thus
     $| G |=|\PSL(2,p^f)| < {p^{3f}}\le {( {q_0}^4-{q_0}^3+{q_0}^2-{q_0}+1)^{6}}$.
     Moreover, note that $|G/N|=|\PSU(n,{q_0})|\ge {q_0}^{15}({q_0}^2-1)({q_0}^3+1)\cdots ({q_0}^6-1)/{\gcd(6,{q_0}+1)}$.
      We have $|G/N|=|\PSU(n,q_0)|>|G|$, also a contradiction.

    Subcase 4.3. $G/N\cong \PSp(2n, q_0)$, $\OO(2n+1,q_0)$, where $n \geq 2$.

    In view of Table 1, we have  $e_L(x)=[{2n}/{\rm{lcm}(2,x)}]$. If $m>2$, it follows using (3.7) that
    $n^2+1<{ 2(m+1){[{2n}/{\rm{lcm}(2,m)}]}}\leq 5n$, and so $n <5$.
    So the possible value $(n,m)$ is one of  $(2,4)$, $(3,3)$, $(3,4)$, $(3,6)$, $(4,3)$, $(4,4)$, $(4,6)$ and  $(4,8)$.
   We will prove that  $|\PSp(2n, q_0)|>|G|$ for  all cases above in the following.
   First we consider the case of $e_L(m)=1$, i.e. $(n,m) =(2,4)$, $(3,3)$, $(3,4)$, $(3,6)$, $(4,3)$, $(4,6)$ and  $(4,8)$,
  and then  $|G/N|= |\PSp(2n, q_0)|\ge {q_0}^4 ({q_0}^2 - 1)({q_0}^4 - 1)/{{\gcd(2,{q_0} - 1)}}$.
  Now if $(n,m)\neq (4,8),$
   then ${p^f} = ({\Phi_m(q_0))_p}\le{q_0}^2+{q_0}+1$ by (3.5), and so
   $| G |=|\PSL(2,p^f)| < {p^{3f}}\le {({q_0}^2+{q_0}+1)^{3}}$.
   It's easy to check  $|G/N|=|\PSp(2n, q_0)|>|G|$, a
   contradiction. If $(n,m)=(4,8)$, it implies that
   $|G/N|=| {\PSp(8,{q_0})}| = {q_0}^{16} ({q_0}^2 - 1)({q_0}^4 - 1)({q_0}^6-1)({q_0}^8-1)/{{\gcd(2,{q_0} - 1)}}$.
   Then   ${p^f} = ({q_0}^4+1)_p$ follows $| G | < {p^{3f}}\le {({q_0}^4 +1)^{3}}$.
   It is obvious that $|G/N|>({q_0}^4+1)^3>|G|$, a contradiction.
   Next $e_L(m)=2$,  that is $(n,m) =(4,4)$.
   Observe that $|G/N|= |\PSp(8, q_0)|={q_0}^{16} ({q_0}^2 - 1)({q_0}^4 - 1)({q_0}^6-1)({q_0}^8-1)/{{\gcd(2,{q_0} - 1)}}$.
   Since ${p^f} = (\Phi_m(q_0))_p^2\le({q_0}^2+1)^2$, it implies
   $| G |=|\PSL(2,p^f)| < {p^{3f}}\le {({q_0}^2+1)^{6}}$.
   Therefore, we deduce that  $|G/N|>|G|$, a contradiction.

    Finally, if  $m=1$ or $m=2$, then from $p^f\leq({q_0}+ 1)_p^n$ , we have $|G|=|\PSL(2,p^f)|<({q_0}+ 1)^{3n}$.
    Furthermore, since  $|G/N|=|\PSp(2n, q_0)|>{q_0}^{2n^2}$,  it
    is easy to check
    that $|\PSp(2n, q_0)|>({q_0}+ 1)^{3n}$, that is $|G/N|>|G|$, a contradiction.

    Subcase 4.4. $G/N\cong \OO^+(2n,q_0)$, where $n \geqslant 3$.

    Applying Table 1, we have $e_L(x)=[{2n}/{\rm{lcm}(2,x)}]$~ if  $x| n$ or $x\nmid 2n$;
    $e_L(x)={2n}/{x}-1$~ if $x\nmid n$ and $x| 2n$.
     If $m| n$ or $m\nmid 2n$, then $e_L(m)=[{2n}/{\rm{lcm}(2,m)}]$,
     where $m>2$. It follows from (3.8) that the inequality $n(n-1)+1<2(m+1)[{2n}/{\rm{lcm}(2,m)}]\leq 5n$, thus $n < 6$,
     that is the pairs $(n,m) = (3,3)$, $(3,4)$, $(4,3)$, $(4,4)$, $(4,6)$, $(5,3)$, $(5,4)$, $(5,5)$, $(5,6)$, $(5,8)$.
     In  the following, it will be shown that $|\OO^+(2n,q_0)|>|G|$ in these cases.
    In the first case, we see that $e_L(m)=1$, i.e. $(n,m) = (3,3)$, $(3,4)$, $(4,3)$,  $(4,6)$, $(5,3)$,
     $(5,5)$, $(5,6)$, $(5,8)$. It follows
     $|G/N|=| \OO^+(2n,q_0)|\geq | {\OO^+(6,{q_0})}| = {q_0}^6 ({q_0}^3-1)({q_0}^2 - 1)({q_0}^4 -1)/{{\gcd(4,{q_0}^3 - 1)}}$.
     According to (3.5), we have ${p^f} = {(\Phi_m(q_0))_p}$. Now if $(n,m)\neq (5,5),$
   then ${p^f} = ({\Phi_m(q_0))_p}\leq {q_0}^4+1$, and so
     $| G |=|\PSL(2,p^f)| < {p^{3f}}\le {({q_0}^4+1)^{3}}$.
   We conclude that  $|G/N|=|\OO^+(2n,q_0)|>|G|$, a
     contradiction. If $(n,m)=(5,5)$,
     then ${p^f} = {(\Phi_m({q_0}))_p}\leq {{(q_0}^5-1)}/{{(q_0}-1)}<{q_0}^5$, and so
      $|G/N|=| {\OO^+(10,{q_0})}|= {q_0}^{20} ({q_0}^5-1)({q_0}^2 - 1)({q_0}^4 -
      1)({q_0}^6-1)({q_0}^8-1)/{{\gcd(4,{q_0}^5 - 1)}}>{q_0}^{15}$. On the other hand,
      $|G|=|\PSL(2,p^f)|<p^{3f}<{q_0}^{15}$, then
      $|G|<{q_0}^{15}<|G/N|$, a contradiction. In the second case,  $e_L(m)=2$, that is $(n,m) = (4,4)$,
      $(5,4)$. Clearly, $|G/N|=|\OO^+(2n,q_0)|\geq {q_0}^{12}({q_0}^2 - 1)({q_0}^4 -
      1)^2({q_0}^6-1)/{\gcd(4,{q_0}^4 - 1)}$.
      We obtain from (3.5) that ${p^f} = {({q_0}^2+1)_p^2}$, which implies  $|G|=|\PSL(2,p^f)|<p^{3f}\le({q_0}^2+1)^6$.
      It implies that $|G/N|>|G|$, a contradiction.

    If $m>2$, $m\nmid n$ and $m| 2n$, then $e_L(m)={2n}/{m}-1$ .
    It follows that $n(n-1)+1<2(m+1)({2n}/{m}-1)<4n + {4n}/{3}$. Thus $n < 7$,
    that is the pairs $(n,m)$ is one of $(3,6)$, $(4,8)$, $(5,10)$, $(6,4)$ and $(6,12)$.
    When $(n,m) = (3,6)$, $(4,8)$, $(5,10)$ and $(6,12)$,
    by (3.5), ${p^f} = {\Phi_m}{({q_0})^{{2n}/{m}-1}} = 1$,  which contradicts the assumption that $p>3$.
    Next if $(n,m) = (6,4)$, then ${p^f} = {({q_0}^2+1)_p^2}$,
   it implies that $| G |=|\PSL(2,p^f)| <{p^{3f}}\le {({q_0}^2 +1)^{6}}$.
    Moreover, since $| {\OO^+(12,{q_0})}| = {q_0}^{30} ({q_0}^6-1)({q_0}^2 - 1)({q_0}^4 - 1)\cdots({q_0}^{10}-1)/{{\gcd(4,{q_0}^6 - 1)}}>({q_0}^2 +1)^{6}$,
    it follows that $|G/N|>|G|$, also a contradiction.

    When $m = 2$ or $m = 1$, we have $p^f\leq({q_0}+1)^n$, and so $| G |=|\PSL(2,p^f)| < ({q_0}+1)^{3n}$.
    Since $|G/N|=|\OO^+(2n,q_0)|> {q_0}^{2n(n-1)}$,  it can be observed that $|G/N|>|G|$,  a contradiction.

    Subcase 4.5. $G/N \cong \OO^-(2n,q_0)$, where $n \geq 2$.

    In terms of  Table 1,  $e_L(x)=[{2n}/{\rm{lcm}(2,x)}]-1$~ if  $x| n$;
    $e_L(x)=[{2n}/{\rm{lcm}(2,x)}]$ if $x\nmid n.$
    If $m| n$ and $m>2$,
    then $e_L(m)=[{2n}/{\rm{lcm}(2,m)}]-1$. Applying the inequality (3.7), it follows that $n(n-1)+1<2(m+1)([{2n}/{\rm{lcm}(2,m)}]-1)\leq 5n$, and so $n < 6$.
    Thus the pairs $(n,m)$ is one of $(3,3)$, $(4,4)$ and $(5,5)$.
    When $(n,m) =(3,3)$ or $(5,5)$, it's clear that ${p^f} = {\Phi _m}{({q_0})^{[{2n}/{\rm{lcm}(2,m)}]-1}} = 1$,
    which contradicts  $p>3$.
  Given $(n,m) = (4,4)$, then
  $|G/N|=|\OO^-(8,q_0)|={q_0}^{12}({q_0}^8 - 1)({q_0}^2 - 1)({q_0}^6-1)/{{\gcd(4,{q_0}^4 + 1)}}$.
    Moreover, $p^f=({q_0}^2 + 1)_p$ by (3.6), and so $| G | =|\PSL(2,p^f)|<p^{3f}\le {({q_0}^2 + 1)^3}$.
    It's obvious that $|G/N|>({q_0}^2 + 1)^3$, a contradiction.

    If $m>2$ and $m\nmid n$, then $e_L(m)=[{2n}/{\rm{lcm}(2,m)}]$.
    It follows that $n(n-1)+1<2(m+1)[{2n}/{\rm{lcm}(2,m)}] \leq 5n$, and so $n < 6$,
    the pairs $(n,m)$ is one of $(3,4)$, $(4,3)$, $(4,6)$, $(3,6)$, $(4,8)$, $(5,4)$, $(5,6)$, $(5,8)$ or $(5,10)$.
    It's not difficult to prove that  $|\OO^-(2n,q_0)|>|G|$.
    First of all, we discuss the case $e_L(m)=1$, i.e. $(n,m) =(3,4)$, $(4,3)$, $(4,6)$, $(3,6)$, $(4,8)$, $(5,6)$, $(5,8)$ or $(5,10)$.
    Then
    $|G/N|=| {\OO^-(2n,{q_0})}|\geq  {q_0}^6 ({q_0}^3+1)({q_0}^2 - 1)({q_0}^4 -1)/{{\gcd(4,{q_0}^3 + 1)}}$. By (3.6),
    it is clear that ${p^f} = {(\Phi_m(q_0))_p}\leq {q_0}^4+1$, and so
    $| G |=|\PSL(2,p^f)|<{p^{3f}}\le {({q_0}^4+1)^{3}}$.
    We can observe $|G/N|=|\OO^-(2n,q_0)|>|G|$, a contradiction.
     Secondly, if $e_L(m)=2$, then $(n,m) = (5,4)$. Since ${p^f} =
    ({q_0}^2+1)_p^2$, we have $| G | < {p^{3f}}\le {({q_0}^2+1)^{6}}$.
   On the other hand, $|G/N|=| {\OO^-(10,{q_0})}| = {q_0}^{20} ({q_0}^5+1)({q_0}^2 - 1)({q_0}^4 -
   1)({q_0}^6-1)({q_0}^8-1)/{{\gcd(4,{q_0}^5 + 1)}}$,
it follows $|\OO^-(10,q_0)|>({q_0}^2+1)^{6}$, and then $|G/N|>|G|$,
also a contradiction.

    When $m = 1$, we have $p^f=({q_0}-1)_p^n$, leading to $|G|=|\PSL(2,p^f)|< ({q_0}-1)^{3n}$.
    Moreover,
    $|G/N|=|\OO^-(2n,q_0)|={q_0}^{n(n-1)} ({q_0}^n+1)({q_0}^2 - 1)({q_0}^4 - 1)\cdots({q_0}^{2(n-1)}-1)/{{\gcd(4,{q_0}^n + 1)}}>({q_0}-1)^{3n}$.
    Therefore,  $|G/N|>|G|$, a contradiction.

    When $m = 2$, note that $p^f=({q_0}+1)_p^n$, so $|G|=|\PSL(2,p^f)|< {({q_0}+1)^{3n}}/{2} $ for ${q_0}\neq 2$.
    We observe that $|G/N|=|\OO^-(2n,q_0)|>|G|$, a contradiction.
     If $q_0=2$, then $|G/N|=2^{n(n-1)} (2^n+1) (2^n-1)\cdots (2^{2(n-1)}-1)$ and $|G|={3^n(3^{2n}-1)}/{2}$.
     Since $|G/N|\leq |G|$, it concludes that $n=2$. So $G/N\cong \OO^-(4,2)\cong \PSL(2,5)$ and $\mu(G)=\mu(\PSL(2,9))$. But $\mu_3(G)=\mu_3(\PSL(2,9))=2/9$ is less than
     $\mu_3(G/N)=\mu_3(\PSL(2,5))=1/3$, which contradicts the fact that $\mu_r(G)\geq \mu_r(G/N)$ in Lemma 2.1.

    Subcase 4.6. $G/N\cong {}^2{B_2(q_0)}$ with ${q_0} = {2^{2s + 1}}$.

    We know that $| {{}^2{B_2(q_0)}} | = {q_0}^2 ({q_0}^2 + 1) ({q_0} - 1)$.
    It follows from (3.6) that
    ${p^f} = {\Phi _4}{({q_0})}_p = {({q_0}^2 + 1)_p}$,
    hence ${q_0}^2 ({q_0} - 1)$ divides ${(({q_0}^2 + 1)_p^2 - 1)}/{2}$.
    If $p^f={q_0}^2+1$, then ${q_0}^2 ({q_0} - 1)$ divides ${(({q_0}^2 + 1)^2 - 1)}/{2}$,
    which implies that $2({q_0} - 1)| ({q_0}^2 + 2)$.
    However,  know that $\gcd({q_0} - 1,{q_0}^2 + 2) = 1$, a contradiction.
    If $p^f=({q_0}^2+1)_p<{q_0}^2+1$, then    $\gcd(({q_0}^2+1)_p+1,({q_0}^2+1)_p-1)\leq 2$.
    Moreover, since ${q_0} = {2^{2s + 1}}$,  5 divides ${q_0}^2+1$, and there exists a prime different from 5 in ${q_0}^2+1$, say s.
    When $p\neq 5$,  we have ${q_0}^2$ divides ${{(q_0}^2+1)}/{5}+1$ or ${{(q_0}^2+1)}/{5}-1$.
    However, it's clear that ${q_0}^2>{{(q_0}^2+1)}/{5}+1$, a contradiction.
    When $p=5$,
    it is evident that ${q_0}^2>{{(q_0}^2+1)}/{s}+1$, a contradiction.

    Subcase 4.7. $G/N\cong{}^3{D_4(q_0)}$.

    By the inequality (3.8), we have $13 < 2(m+1)e_L(m)$,  thus $m= 3, 6 , 12$.
    If $m= 3$,   then $p^f=({q_0}^2+{q_0}+1)_p^2$,
    which implies that $|G|=|\PSL(2,p^f)|<p^{3f}\le ({q_0}^2+{q_0}+1)^6$.
    But $|G/N|=|{}^3{D_4(q_0)}|={q_0}^{12}({q_0}^6-1)^2({q_0}^4-{q_0}^2+1)$.
    Obviously,
    $|G/N|>({q_0}^2+{q_0}+1)^6 >|G|$, a contradiction.
    If $m= 6$, then  $p^f=({q_0}^2-{q_0}+1)_p^2$, leading to $|G|<p^{3f}\le ({q_0}^2-{q_0}+1)^6$.
    It shows $|G/N|>|G|$, a contradiction.
    If $m=12$, we have $p^f=({q_0}^4-{q_0}^2+1)_p$, and then $|G|<p^{3f}\le ({q_0}^4-{q_0}^2+1)^3$.
    So $|G/N|>|G|$,
    a contradiction.

    Subcase 4.8. $G/N\cong {G_2(q_0)}$.

    Then there is the inequality $7< 2(m+1)e_L(m)$, then $m= 2, 3 , 6$.
    Let $m= 2$,  we obtain from (3.5) that $p^f={({q_0}+1)_p^2}$, which implies that $|G|=|\PSL(2,p^f)|< p^{3f}\le ({q_0}+1)^6$.
    Furthermore, note that $|G/N|=|{G_2(q_0)}|={q_0}^6({q_0}^6-1)({q_0}^2-1)$,
    it shows $|G/N|>|G|$, a contradiction.
    Let $m= 3$, we have $|G|<p^{3f}\le ({q_0}^2+{q_0}+1)^3$.
    It is easy to check  $|G/N|=|{G_2(q_0)}|>|G|$, a contradiction.
    If $m= 6$, then $|G|< ({q_0}^2-{q_0}+1)^3$.
   Clearly, $|G/N|=|{G_2(q_0)}|>|G|$,
    which contradicts to $|G/N|\big||G|$.

    Subcase 4.9. $G/N\cong {}^2{G_2(q_0)}$.

     Applying  (3.8), the inequality $4<2(m+1)e_L(m)$ holds for $m=2,6$.
    When $m= 2$, since $p^f=({q_0}+1)_p$,
    we have $|G|=|\PSL(2,p^f)|<p^{3f}\le ({q_0}+1)^3$.
    But $|{}^2{G_2(q_0)}|={q_0}^3({q_0}^3+1)({q_0}-1)$, it follows  $|G/N|>|G|$, a contradiction.
     If $m= 6$, then $|G|<p^{3f}\le ({q_0}^2-{q_0}+1)^3<|{}^2{G_2(q_0)}|=|G/N|$, a contradiction.

    Subcase 4.10. $G/N\cong  {F_4(q_0)}$.

    Similarly, we can obtain
    $25<2(m+1)e_L(m)$, so  $m=6,12$ by calculation.
    If $m=6$,  the equality (3.5) follows $p^f=({q_0}^2-{q_0}+1)_p^2$,
    leading to $|G|=|\PSL(2,p^f)|< ({q_0}^2-{q_0}+1)^6$.
    Since $|G/N|=|{F_4(q_0)}|={q_0}^{24}({q_0}^{12}-1)({q_0}^8-1)({q_0}^6-1)({q_0}^2-1)$,
    it shows that
    $|G/N|>|G|$, a contradiction.
    Let $m= 12$, we have $|G|<p^{3f}\le ({q_0}^4-{q_0}^2+1)^3$.
    Obviously, $|G/N|>|G|$,   still a contradiction.

    Subcase 4.11. $G/N\cong {{}^2F_4(q_0)}$.

    In light of (3.7), the inequality $13<2(m+1)e_L(m)$ holds for $m=4,6,12$.
    If $m= 4$,  we obtain from (3.5) that $p^f=({q_0}^2+1)_p^2$,
    and so $|G|=|\PSL(2,p^f)|<p^{3f}\le ({q_0}^2+1)^6$.
    Note that  $|G/N|=|{}^2{F_4(q_0)}|={q_0}^{12}({q_0}^6+1)({q_0}^4-1)({q_0}^3+1)({q_0}-1)$.
    It is obvious that  $ |G/N|>({q_0}^2+1)^6>|G|$,  a contradiction.
    When $m= 6$, we have $|G|<p^{3f}\le ({q_0}^2-{q_0}+1)^3$.
    It shows  $|G/N|> |G|$ ,
    a contradiction.
    If $m=12$, then $|G|<p^{3f}\le ({q_0}^4-{q_0}^2+1)^3<|G/N|$, a contradiction.

    Subcase 4.12. $G/N\cong E_6(q_0)$.

    From (3.6) we can get  $37<2(m+1)e_L(m)$, which there's no such  $m$ of Table 2 satisfying,  a contradiction.

    Subcase 4.13. $G/N\cong {}^2{E_6(q_0)}$.

    Now we have the inequality $37<2(m+1)e_L(m)$, then $m=6,18$.
    If $m=6$, then $p^f=({q_0}^2-{q_0}+1)_p^3$,
    which leads to  $|G|=|\PSL(2,p^f)|<p^{3f}\le ({q_0}^2-{q_0}+1)^9$.
    Furthermore, since $|G/N|=|{}^2{E_6(q_0)}|={q_0}^{36}({q_0}^{12}-1)({q_0}^9+1)({q_0}^8-1)({q_0}^6-1)({q_0}^5+1)({q_0}^2-1)/{\gcd(3,{q_0}+1)}>({q_0}^2-{q_0}+1)^9$,
    it implies that $|G/N|> |G|$,  a contradiction.
    If $m= 18$, then $|G|<p^{3f}\le ({q_0}^6-{q_0}^3+1)^3$.
    Clearly, $|G/N|>|G|$, also a contradiction.

    Subcase 4.14. $G/N\cong {E_7(q_0)}$.

    By Table 2, it is easy to check $64 >2(m+1)e_L(m)$ holds for any $m$,  which contradicts to (3.8).

    Subcase 4.15. $G/N\cong {E_8}(q_0)$.

    According to the inequality (3.8), we have $121 <2(m+1)e_L(m)$. But this inequality has  no solution in Table 2, a contradiction.

    Finally, we assume that $G/N$ is isomorphic to $^2F_4(2)'$ or a sporadic simple group whose order is known from  Table of ATLAS
    \cite{CCN}. Choose the largest order of Sylow subgroups of
    $G/N$, say $l$.
    Since $|G|=|\PSL(2,p^f)|={p^f(p^{2f}-1)}/{2}$ and $p^f\leq l$,
    we have $|G|\leq {l(l^2-1)}/{2}$. It is easy to check
    $|G/N|>{l(l^2-1)}/{2}$ when $G/N$ is isomorphic to $^2F_4(2)'$  or a sporadic simple group. So
    $|G/N|>|G|$, a contradiction.

    Therefore, $G\cong \PSL(2,q)$. This completes the proof.

\end{document}